\documentclass[a4paper,11pt]{amsart}

\usepackage{amsmath,amssymb,amsthm}

 \setlength{\parindent}{2em}
 \setlength{\parskip}{3pt plus1pt minus2pt}
 \setlength{\baselineskip}{20pt plus2pt minus1pt}
 \setlength{\textheight}{21.5 true cm}
 \setlength{\textwidth}{14.5true cm}
  \setlength{\headsep}{10truemm}
  \addtolength{\hoffset}{-12mm}

\numberwithin{equation}{section} \theoremstyle{plain}
\newtheorem{thm}{Theorem}[section]

\newtheorem{prop}[thm]{Proposition}
\newtheorem{lem}[thm]{Lemma}
\newtheorem{cor}[thm]{Corollary}

\newtheorem{conj}[thm]{Conjecture}

\newtheorem{rem}[thm]{Remark}

\newtheorem*{acknow}{Acknowledgments}
   
 \makeatletter

\def\<{\langle}
\def\>{\rangle}
\def\({\left(}
\def\){\right)}
\def\[{\left[}
\def\]{\right]}
\def\tr{\mathop{\text{tr}}}

\makeatother

\title[A weak version of the Perdomo Conjecture]{A lower bound for $L_2$ length of second fundamental form on minimal hypersurfaces}

\author[J. Q. Ge]{Jianquan Ge}
\address{School of Mathematical Sciences, Laboratory of Mathematics and Complex Systems, Beijing Normal University, Beijing 100875, P.R. CHINA.}
\email{jqge@bnu.edu.cn}

\author[F. G. Li]{Fagui Li$^{*}$}
\address{School of Mathematical Sciences, Laboratory of Mathematics and Complex Systems, Beijing Normal University, Beijing 100875, P.R. CHINA.}
\email{faguili@mail.bnu.edu.cn}

\subjclass[2010]{53C42, 53C24, 53C65.}
\date{}
\keywords{Perdomo Conjecture, minimal hypersurface,  pinching theorem, integral inequality.}
\thanks {$^{*}$ the corresponding author.}
\thanks{The first author is partially supported by Beijing Natural Science Foundation (No. Z190003).}

\begin{document}
\maketitle

\begin{abstract}
We prove a weak version of the Perdomo Conjecture, namely, there is a positive constant
  $\delta(n)>0$ depending only on $n$ such that on any closed embedded, non-totally geodesic, minimal hypersurface $M^n$ in $\mathbb{S}^{n+1}$,
 $$\int_{M}S \geq \delta(n){\rm Vol}(M^n),$$
 where $S$ is the squared length of the second fundamental form of $M^n$. The Perdomo Conjecture asserts that $\delta(n)=n$ which is still open in general.
 As byproducts, we also obtain some integral inequalities and Simons-type pinching results on  closed embedded  (or immersed) minimal hypersurfaces, with the first positive eigenvalue $\lambda_1(M)$ of the Laplacian involved.
\end{abstract}

\section{Introduction}
Half a century ago, S. S. Chern \cite{Chern68}  proposed the following famous conjecture.
\begin{conj} \label{Conjecture SSChern 1965}
Let $M^n$ be a  closed minimal  hypersurface of constant scalar curvature \emph{(CSC)} $R_M$ immersed in the unit sphere $\mathbb{S}^{n+1}$. Then the set of all possible values of $R_M$  is discrete.
\end{conj}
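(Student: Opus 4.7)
Since for a minimal hypersurface $M^n\subset\mathbb{S}^{n+1}$ the Gauss equation gives $R_M=n(n-1)-S$ with $S=|A|^2$, constancy (respectively, discreteness) of $R_M$ is equivalent to the same property for $S$. My plan is therefore to prove that the set of constant values of $S$ realised by closed immersed minimal hypersurfaces in $\mathbb{S}^{n+1}$ is discrete. The starting point is Simons' identity in its minimal form,
$$\tfrac{1}{2}\Delta S \;=\; |\nabla A|^2 + S(n-S),$$
which, integrated over a closed $M$ with $S$ constant, yields
$$\int_M |\nabla A|^2 \;=\; S(S-n)\,\mathrm{Vol}(M).$$
This already forces $S\in\{0\}\cup[n,\infty)$, with $S=0$ characterising the totally geodesic equators and $S=n$ characterising the Clifford tori after Chern--do Carmo--Kobayashi and Lawson.

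The heart of the argument would be to iterate this identity in the spirit of Peng--Terng: compute $\tfrac{1}{2}\Delta|\nabla A|^2$ and the analogous Laplacians of $f_3=\tr A^3$ and $f_4=\tr A^4$, integrate them against test functions such as $S^k$ or $(S-n)^k$, and use a refined Kato-type inequality $|\nabla A|^2\geq c(n)\,|\nabla S|^2/S$ to close the chain. Under the constancy of $S$ all gradient-of-$S$ terms vanish, reducing every such identity to a polynomial relation among the constants $S,f_3,f_4$ and the spatial averages of $|\nabla A|^2$, $|\nabla^2 A|^2$, etc. If this polynomial system can be shown to admit only finitely many solutions in each compact range of $S$, then, combined with standard compactness for closed minimal hypersurfaces of bounded $|A|$, one obtains discreteness of $R_M$.

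The principal obstacle, and the reason the conjecture has been open for half a century, is exactly the collapse of the analytic machinery when $\nabla S\equiv 0$: the Kato inequality becomes vacuous, and the higher-order Simons identities degenerate into purely algebraic constraints among the eigenvalues of $A$, which are in general too weak to exclude continuous intervals of $S$-values beyond the first Peng--Terng gap. A promising new ingredient, very much in the spirit of this paper, would be to bring the first Laplacian eigenvalue $\lambda_1(M)$ into play: Takahashi's bound $\lambda_1\leq n$ (conjecturally sharp for embedded minimal hypersurfaces by Yau), together with integral inequalities coupling $\lambda_1$ and $S$ analogous to the byproducts obtained in the present work, might sharpen the Peng--Terng gaps successively and force discreteness.
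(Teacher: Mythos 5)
The statement you are addressing is Conjecture~1.1 (the Chern Conjecture), which the paper does not prove and explicitly treats as open; only partial results (Simons, Peng--Terng, Chang for $n=3$, Ding--Xin, Tang--Yan, etc.) are surveyed. So there is no ``paper's own proof'' to compare against, and your text is not a proof either: it is an accurate summary of the known evidence together with a speculative program. The reductions you state at the outset are correct --- the Gauss equation $R_M=n(n-1)-S$ does make discreteness of $R_M$ equivalent to discreteness of $S$, and integrating Simons' identity for constant $S$ does give $S\in\{0\}\cup[n,\infty)$ with the endpoint cases classified --- but everything beyond the first gap is conditional. The decisive step is phrased as ``\emph{if} this polynomial system can be shown to admit only finitely many solutions,'' and, as you yourself then concede, when $\nabla S\equiv 0$ the Kato-type inequalities and the higher-order Simons identities degenerate precisely so that this finiteness cannot currently be extracted. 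Naming the obstruction does not overcome it.

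Two further points in your sketch would need repair even as a program. First, discreteness of the value set is a statement about \emph{all} closed immersed minimal hypersurfaces with constant $S$, not about a single one; passing from ``finitely many solutions of an algebraic system in each compact range'' to discreteness requires a compactness theorem for the space of such hypersurfaces, and for immersed (non-embedded) hypersurfaces there is no a priori area or index bound to invoke, so ``standard compactness for bounded $|A|$'' is not available in the generality of the conjecture. Second, the proposed use of $\lambda_1(M)$ in the style of this paper's byproducts (Theorem~1.8, Lemma~2.3) produces \emph{integral} inequalities involving $S_{\max}-S_{\min}$ and $\int_M S$, which become vacuous or merely recover Simons-type information when $S$ is constant; they bound the average of $S$ from below but say nothing about which constant values $>n$ can occur. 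In short: the known results you cite are correctly stated, but the proposal contains no new argument that would close the conjecture.
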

S. T. Yau raised it again as the 105th problem in his Problem Section \cite{Yau  1982}.  The refined version of the Chern Conjecture can be stated as follows \cite{M Scherfner S Weiss and  Yau 2012}.
\begin{conj} [\textbf{Chern Conjecture}]
\label{Conjecture Refined version of Chern Conjecture}
Let $M^n$ be a  closed  immersed minimal CSC hypersurface of $\mathbb{S}^{n+1}$. Then $M^n$ is isoparametric.
\end{conj}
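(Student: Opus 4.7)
The plan is to exploit the classical pinching framework built on Simons' identity, supplemented by the integral inequalities developed in this paper. Since $M^n$ is minimal in $\mathbb{S}^{n+1}$, the Gauss equation gives $R_M = n(n-1)-S$, so the hypothesis of constant scalar curvature is equivalent to $S$ being constant. Hence $\Delta S\equiv 0$, and Simons' identity
\[
\tfrac12\Delta S \;=\; |\nabla A|^2 + nS - \operatorname{tr}(A^4)
\]
collapses to the pointwise equality $|\nabla A|^2 = \operatorname{tr}(A^4)-nS$. The task reduces to showing that the only constant values of $S$ compatible with this are the isoparametric ones $0,n,2n,\ldots$

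Second, I would rule out constant $S$ in the forbidden intervals one by one. For $0<S<n$, the elementary inequality $\operatorname{tr}(A^4)\le S^2$ gives $|\nabla A|^2\le S(S-n)<0$, contradicting $|\nabla A|^2\ge 0$; so in this range only $S=0$ is possible and $M^n$ is totally geodesic. At $S=n$ one forces $\nabla A\equiv 0$, hence $M^n$ is a Clifford minimal hypersurface. For each $k\ge 1$, I would attack the next interval $(kn,(k+1)n)$ by iterating the Peng--Terng scheme: compute $\tfrac12\Delta|\nabla A|^2$, combine it with formulas for $\Delta\operatorname{tr}(A^3)$ and $\Delta\operatorname{tr}(A^4)$, and extract higher-order integral inequalities forbidding constant $S$ in the open interval.

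To push this iteration beyond the range handled in the literature, I would insert the integral estimate $\int_M S\ge \delta(n)\operatorname{Vol}(M)$ proved in this paper together with the Simons-type integral inequalities involving $\lambda_1(M)$. Under constant $S$ these become two-sided quantitative pinching on $S$, and combined with the conjectural lower bound $\lambda_1(M)\ge n/2$ (Yau's conjecture for embedded minimal hypersurfaces, known in several cases) they should localize $S$ near the next isoparametric value; a subsequent rigidity argument on the equality case would then close that level, and one proceeds inductively on $k$.

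The main obstacle, and the reason the Chern conjecture remains open in general, is precisely this recursive step: each Peng--Terng gap $(kn,(k+1)n)$ has historically required a genuinely new argument, and no uniform mechanism is known. Even granting the Yau--Choi--Wang lower bound on $\lambda_1$ and the integral estimates of this paper, upgrading an averaged control on $S$ to a pointwise identification of $S$ with an isoparametric value demands a rigidity classification of the equality case of $\int_M S\ge\delta(n)\operatorname{Vol}(M)$ at \emph{every} isoparametric level, a problem that appears comparable in difficulty to the original conjecture.
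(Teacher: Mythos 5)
This statement is a \emph{conjecture}, not a theorem of the paper: the authors state the Chern Conjecture as an open problem (they explicitly note that even the constant-$S$ second gap ``$S\equiv\mathrm{Constant}>n \Rightarrow S\geq 2n$'' is unresolved in higher dimensions), and the paper contains no proof of it. Your proposal is likewise not a proof; it is a survey of the known pinching strategy, and you yourself concede in the final paragraph that the recursive step over the Peng--Terng gaps $(kn,(k+1)n)$ cannot be carried out by any known mechanism. That concession is the genuine gap: nothing in your outline, nor in the integral inequalities of this paper, upgrades the averaged bound $\int_M S\geq\delta(n)\mathrm{Vol}(M)$ (which in any case requires embeddedness, an assumption absent from the conjecture) to the pointwise rigidity needed to exclude constant values of $S$ in each forbidden interval. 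The literature has only handled $0<S<n$ (Simons, Chern--do Carmo--Kobayashi, Lawson) and partial results near $S=n$; beyond that your induction has no base for $k\geq 2$ and no inductive step.

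A secondary but concrete error: your version of the Simons identity is wrong for hypersurfaces. On a closed minimal hypersurface of $\mathbb{S}^{n+1}$ the identity is
\[
\tfrac{1}{2}\Delta S=|\nabla A|^{2}+S(n-S),
\]
i.e.\ the quartic term is $-S^{2}$, not $-\operatorname{tr}(A^{4})$ (these agree only when at most one principal curvature is nonzero). Your first-gap argument happens to reach the correct conclusion because $\operatorname{tr}(A^{4})\leq S^{2}$ points the inequality the right way, but any attempt to run the Peng--Terng computations for higher gaps from the incorrect identity would fail. In short: the statement remains open, the paper offers no proof to compare against, and your proposal does not constitute one.
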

The classification of isoparametric hypersurfaces in unit spheres was initiated in late 1930s by Cartan   
and finally completed till to the year 2020 by many mathematicians (cf. Cecil-Chi-Jenson \cite{CCJ07}, Immervoll \cite{Immervoll 2008}, Chi \cite{Chi11,Chi13,Chi16}, Dorfmeister-Neher \cite{Dorfmeister and  Neher 1985} and Miyaoka \cite{Miy13,Miy16}, etc.), please see the excellent book \cite{CR15} and the elegant survey \cite{Chi19} for more details and references.

 In 1968, J. Simons \cite{Simons68}  gave the first pinching result which motivated the Chern Conjecture, since the discreteness of $R_M$ is equivalent to that of $S:=|A|^2$ ($A$ is the shape operator) on minimal hypersurfaces by the Gauss equation $R_M=n(n-1)-S$.
\begin{thm}[\textbf{Simons inequality}]\label{thm J. Simons}
Let $M^n$ be a  closed immersed minimal hypersurface of $\mathbb{S}^{n+1}$  with squared length of the second fundamental form $S$.
Then
$$
\int_{M}S\left( S-n\right)\geq0.
$$
In particular, if $0\leq S\leq n$, one has either  $S\equiv0$ or $S\equiv n$  on $M^n$.
\end{thm}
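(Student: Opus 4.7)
The plan is to prove the Simons inequality by establishing the pointwise Simons identity for the squared norm of the second fundamental form, integrating it on the closed hypersurface, and then extracting the rigidity by a sign analysis.

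First I would derive \emph{Simons' identity} for the Laplacian of $S=|A|^2$ on a minimal hypersurface in $\mathbb{S}^{n+1}$, namely
\begin{equation*}
\tfrac{1}{2}\Delta S \;=\; |\nabla A|^2 + S\,(n-S).
\end{equation*}
The derivation proceeds by expanding $\tfrac{1}{2}\Delta S=\langle\Delta A,A\rangle+|\nabla A|^2$ and computing $\Delta A$ via the Codazzi equation together with the Ricci commutation formula applied to the symmetric tensor $A$. The commutator introduces curvature terms which are evaluated by the Gauss equation for a hypersurface in $\mathbb{S}^{n+1}$, yielding expressions of the form $\mathrm{tr}(A^2)\,\mathrm{tr}(A^2)-(\mathrm{tr}A)\,\mathrm{tr}(A^3)$ together with the ambient sectional curvature contribution of the sphere, which is $1$. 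The minimality assumption $\mathrm{tr}\,A=0$ eliminates the trace terms and leaves exactly $nS-S^2=S(n-S)$.

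Next I would integrate this identity over the closed manifold $M^n$. Since $M$ has no boundary, the divergence theorem gives $\int_M \Delta S=0$, hence
\begin{equation*}
0 \;=\; \int_M |\nabla A|^2 + \int_M S\,(n-S),
\end{equation*}
which rearranges to
\begin{equation*}
\int_M S(S-n) \;=\; \int_M |\nabla A|^2 \;\geq\; 0,
\end{equation*}
establishing the integral inequality.

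Finally, for the rigidity statement under $0\le S\le n$, the pointwise integrand $S(n-S)$ is nonnegative and so is $|\nabla A|^2$. The identity above then forces both to vanish identically: $|\nabla A|\equiv 0$ and $S(n-S)\equiv 0$. The first equation shows that $A$ is parallel, so $S$ is constant, and then the second equation forces $S\equiv 0$ or $S\equiv n$. The main technical obstacle is the careful derivation of Simons' identity — in particular, correctly tracking the curvature terms produced by commuting two covariant derivatives on the shape operator and ensuring the ambient sectional curvature of $\mathbb{S}^{n+1}$ is folded in with the right sign so that the remainder after imposing $\mathrm{tr}\,A=0$ is exactly $S(n-S)$; the subsequent integration and rigidity arguments are short and automatic once this identity is in hand.
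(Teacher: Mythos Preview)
Your proposal is correct and is exactly the standard proof via the Simons identity. The paper does not supply its own proof of this theorem, treating it as a classical result of Simons~\cite{Simons68}; it merely records the key identity $\tfrac{1}{2}\Delta S=|\nabla h|^2+S(n-S)$ as equation~(\ref{equation Simons equation}) and uses it elsewhere, so your argument matches precisely the route the paper relies on.
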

 The classification
  of  $S\equiv n$  in Theorem \ref{thm J. Simons}  was characterized by Chern-do Carmo-Kobayashi \cite{Chern do Carmo Kobayashi 1970} and Lawson \cite{Lawson 1969}  independently. Namely,  \emph{the Clifford tori are the only closed minimal CSC hypersurfaces in $\mathbb{S}^{n+1}$  with
  $S \equiv n$}, i.e.,
  $$
  M^n=S^{k}(\sqrt{\frac{k}{n}})\times S^{n-k}(\sqrt{\frac{n-k}{n}}), \ \  1\leq k\leq n-1.
  $$

 In 1983, Peng and Terng \cite{Peng and Terng 1983,Peng and Terng1 1983} made the first breakthrough towards Chern  Conjecture \ref{Conjecture SSChern 1965}, namely, for minimal CSC hypersurfaces in $\mathbb{S}^{n+1}$,
\emph{if $S > n$, then $S > n+ \frac{1}{12n}$. Moreover, if $S > 3$ for $n = 3$, then $S \geq  6$.}  In 1993, Chang \cite{Chang 1993}  finished the proof of Chern  Conjecture \ref{Conjecture Refined version of Chern Conjecture} for $n=3$.
  Yang-Cheng \cite{Cheng Qing Ming and H Yang 1998}  and
  Suh-Yang  \cite{Suh Yang2007}  improved the second gap from $\frac{1}{12n}$ to $\frac{3n}{7}$. However, it is still an open problem for higher dimensional case whether $S \geq 2n$ if $S > n$. As for minimal isoparametric hypersurfaces in $\mathbb{S}^{n+1}$ with $g$ distinct principal curvatures, it indeed satisfies $S=(g-1)n\geq2n$ when we exclude the $g=1$ case (the equators) and the $g=2$ case (the Clifford tori).

Strongly supporting Chern Conjecture \ref{Conjecture Refined version of Chern Conjecture}, a recent remarkable progress by Tang-Wei-Yan  \cite{TWY18} and  Tang-Yan \cite{TY20} generalized the theorem of de Almeida and Brito \cite{Almeida and Brito 1990} for $n = 3$ to arbitrary dimension $n$. Namely, \emph{a closed immersed hypersurface $M^n$ in  $\mathbb{S}^{n+1}$ having constant $1,2,\cdots,(n-1)$-th mean curvatures and nonnegative scalar curvature $R_M\geq0$ is isoparametric}. de Almeida-Brito-Scherfner-Weiss \cite{Almeida  Brito Scherfner and  Weiss 2018} showed that
 \emph{a closed immersed hypersurface $M^n$ in  $\mathbb{S}^{n+1}$ having constant Gauss-Kronecker curvature $K_M$ and $3$ distinct principal curvatures everywhere is isoparametric.}
For the four dimensional case, Deng-Gu-Wei \cite{Deng Gu and  Wei  2017} proved that \emph{if $M^4$ is a closed Willmore minimal CSC hypersurface in $\mathbb{S}^5$, then it is isoparametric.} In other words, in dimension four \cite{Deng Gu and  Wei  2017}  dropped the nonnegativity assumption  $R_M\geq0$ of \cite{TY20}, under the new condition of being Willmore which is equivalent to that the third mean curvature vanishes other than being only a constant as in \cite{TY20}.
In fact, given some pinching restrictions other than identities to the third mean curvature and the Gauss-Kronecker curvature, one can also remove the nonnegativity assumption  $R_M\geq0$ of \cite{TY20} in dimension four (cf. \cite{Li21}).
More related results can be found in the  surveys  by Scherfner-Weiss \cite{M Scherfner S Weiss 2008}, Scherfner-Weiss-Yau \cite{M Scherfner S Weiss and  Yau 2012} and Ge-Tang \cite{Ge Tang 2012}. Very recently, we \cite{Ge Li 2020} gave a characterization to the condition of Chern Conjecture \ref{Conjecture Refined version of Chern Conjecture}, a Takahashi-type theorem, i.e., \emph{An immersed hypersurface $M^n$ in  $\mathbb{S}^{n+1}$  is minimal and has constant scalar curvature if and only if $\Delta \nu=\lambda\nu$ for some constant $\lambda$, where $\nu$ is a unit normal vector field of $M^n$}.

Without assuming constant scalar curvature,
 Peng and Terng \cite{Peng and Terng 1983,Peng and Terng1 1983} obtained 
 that 
  \emph{there exists a positive constant
   $\delta(n)$ depending only on $n$,
   such that if $n\leq S\leq n+\delta(n)$, $n\leq 5$,
   then $S\equiv n$, i.e., $M^n$ is a minimal Clifford torus.}
   Later, Cheng-Ishikawa \cite{Cheng Qing Ming and Ishikawa1999} improved the previous pinching constant for $n\leq 5$,  Wei-Xu \cite{Si Ming Wei  Hong Wei Xu2007} extended it to $n = 6, 7$, and  Zhang \cite{Qin Zhang 2010}  extended it to $n\leq8$.
Finally,   Ding and Xin \cite{Qi Ding  Y.L. Xin. 2011} proved the second gap for all dimensions, in particular,  they showed $\delta(n)=\frac{n}{23}$ for
 $n\geq6$.  Xu-Xu \cite{Xu H. Xu 2017} improved this pinching constant to $\delta(n)=\frac{n}{22}$ and  Li-Xu-Xu \cite{Li Lei Hongwei Xu  Zhiyuan Xu 2017} further improved it to $\delta(n)=\frac{n}{18}$. As a matter of fact in the pinching results above, the condition $S\geq n$ is indispensable owing to  some counterexamples of Otsuki \cite{Otsuki 1970}.

In this paper, without assuming constant scalar curvature, we are interested in whether there is a universal lower bound for the mean value of $S$ on non-totally geodesic minimal hypersurfaces.
Inspired by the preceding paper \cite{Ge Li 2020}, we answer this question affirmatively for embedded hypersurfaces.
 \begin{thm}[\textbf{Main Theorem}]\label{Corollary  gap  results of Perdomo Conjecture in the introduction}
 Let $M^n$ be a closed embedded, non-totally geodesic, minimal hypersurface in $\mathbb{S}^{n+1}$.
 Then there is  a positive constant
  $\delta(n)>0$, depending only on $n$,
  such that
 $$\int_{M}S \geq \delta(n){\rm Vol}(M^n).$$
 \end{thm}

In fact, Theorem \ref{Corollary  gap  results of Perdomo Conjecture in the introduction} provides an evidence to the following Perdomo Conjecture.
\begin{conj}[\textbf{Perdomo  Conjecture \cite{Perdomo 2004 average of the scalar curvature}}]\label{Conjecture Perdomo 2004}
Let $M^n$ be a  closed  embedded, non-totally geodesic, minimal hypersurface in $\mathbb{S}^{n+1}$, then
$$\int_{M}S \geq n{\rm Vol}(M^n).$$ Moreover, the equality holds if and only if $S\equiv n$, i.e., $M^n$ is a minimal Clifford torus.
\end{conj}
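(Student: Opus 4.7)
The plan is to combine the Gauss-map equation for minimal hypersurfaces with Choi-Wang's eigenvalue bound, and then exploit embeddedness to control the resulting mean-normal term.

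Write the unit normal of $M$ in $\mathbb{S}^{n+1}\subset\mathbb{R}^{n+2}$ as an $\mathbb{R}^{n+2}$-valued map $\nu$. A direct computation using $\nabla^{\mathbb{R}}_{e_i}\nu=-A(e_i)$, the Codazzi identity and minimality yields, componentwise,
$$\Delta_M\nu \;=\; -S\,\nu,$$
so $\int_M S=\int_M|\nabla\nu|^2$. Writing $\bar\nu=\mathrm{Vol}(M)^{-1}\int_M\nu$ and applying the Poincar\'e--Wirtinger inequality to each component of $\nu-\bar\nu$,
$$\int_M S \;=\; \int_M|\nabla\nu|^2 \;\geq\; \lambda_1(M)\int_M|\nu-\bar\nu|^2 \;=\; \lambda_1(M)\,(1-|\bar\nu|^2)\,\mathrm{Vol}(M).$$
The first input from embeddedness is Choi-Wang's estimate $\lambda_1(M)\geq n/2$, which reduces the problem to bounding $1-|\bar\nu|^2$ from below.

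For this, I would use a second consequence of embeddedness: since $M=\partial\Omega$ for some region $\Omega\subset\mathbb{S}^{n+1}$, apply the divergence theorem on $\mathbb{S}^{n+1}$ to the tangential projection $a-\langle a,x\rangle x$ of a constant vector $a\in\mathbb{R}^{n+2}$. The identity $\mathrm{div}_{\mathbb{S}^{n+1}}(a-\langle a,x\rangle x)=-(n+1)\langle a,x\rangle$, together with $\langle\nu,x\rangle=0$ on $M$, gives $\int_M\nu=-(n+1)\int_\Omega x\,dV_{\mathbb{S}^{n+1}}$. Since $|x|=1$, comparing the right-hand side with a hemispherical integral (for the unit vector $\hat v=\int_\Omega x/|\int_\Omega x|$) yields $\bigl|\int_\Omega x\bigr|\leq \omega_n/(n+1)$, and hence $|\bar\nu|\,\mathrm{Vol}(M)\leq\omega_n$, with equality if and only if $\Omega$ is a hemisphere, i.e., $M$ is totally geodesic. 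Therefore
$$(1-|\bar\nu|^2)\mathrm{Vol}(M) \;\geq\; \mathrm{Vol}(M)-|\bar\nu|^2\mathrm{Vol}(M) \;\geq\; \mathrm{Vol}(M)-\omega_n,$$
and combining with $\lambda_1(M)\geq n/2$ produces the concrete bound
$$\int_M S \;\geq\; \frac{n}{2}\bigl(\mathrm{Vol}(M)-\omega_n\bigr).$$

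The remaining and most delicate step is to upgrade this to a lower bound of the form $\delta(n)\,\mathrm{Vol}(M)$; equivalently, one must exhibit a universal area gap $\mathrm{Vol}(M)\geq(1+c(n))\omega_n$ for every closed embedded non-totally-geodesic minimal hypersurface in $\mathbb{S}^{n+1}$. I would argue this by a compactness-and-rigidity scheme: a hypothetical sequence $M_i$ with $\mathrm{Vol}(M_i)\to\omega_n$ would, by monotonicity and varifold compactness, subsequentially converge to a stationary integer rectifiable limit of area $\omega_n$; the equality case of the monotonicity formula forces this limit to be a great sphere with multiplicity one; Allard's $\varepsilon$-regularity then places $M_i$ in a $C^{2,\alpha}$-neighborhood of the equator for large $i$, and the isolation of the equator (modulo rotations) among closed minimal hypersurfaces of $\mathbb{S}^{n+1}$ forces $M_i$ to be totally geodesic, contradicting the hypothesis. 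Alternatively, one could attempt a direct Simons-type pinching: when $\mathrm{Vol}(M)$ is close to $\omega_n$, show that $S$ is small in $L^\infty$ and conclude totally geodesic from the original Simons gap ($S\leq n$ with $S\not\equiv n$ forces $S\equiv 0$). Executing either of these carefully is where the real work lies; the computational steps above are essentially calculations, while the area-gap step requires genuine geometric-measure-theoretic input.
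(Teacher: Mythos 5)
Your proposal does not prove the statement it is attached to. What you establish (modulo the area--gap step) is the paper's \emph{weak} version of the Perdomo Conjecture, i.e.\ the Main Theorem $\int_M S\geq\delta(n)\,{\rm Vol}(M^n)$, and by essentially the paper's own route: the identity $\Delta\psi_a=-S\psi_a$ on minimal hypersurfaces, the Poincar\'e inequality applied to the components of the Gauss map, the divergence theorem on a component of $\mathbb{S}^{n+1}\setminus M$ to bound $\bigl|\int_M\nu\bigr|$ by ${\rm Vol}(\mathbb{S}^n)$, and Choi--Wang's $\lambda_1(M)>n/2$. Those computations are all correct, and your direct bound on the vector $\int_M\nu$ is a clean substitute for the paper's lemma on the kernel of $a\mapsto\int_M\psi_a$ (the paper's version, $\int_M S\geq\lambda_1(M)\frac{{\rm Vol}^2(M)-{\rm Vol}^2(\mathbb{S}^n)}{{\rm Vol}(M)}$, is marginally sharper than your ${\rm Vol}(M)-{\rm Vol}(\mathbb{S}^n)$). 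The final ingredient you flag as ``where the real work lies'' --- the universal area gap ${\rm Vol}(M)\geq(1+c(n)){\rm Vol}(\mathbb{S}^n)$ for non-totally-geodesic minimal hypersurfaces --- does not need your compactness-and-rigidity scheme: it is a 1984 theorem of Cheng--Li--Yau (valid even for immersed hypersurfaces), which is exactly what the paper cites.

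The genuine gap is that this chain of inequalities cannot reach the conjectured constant $n$, nor the equality characterization. Concretely, your method yields $\delta(n)$ on the order of $\frac{n}{2}\cdot\frac{c(n)}{1+c(n)}$, which is strictly below $n/2$; even under the unproven Yau Conjecture $\lambda_1(M)=n$ \emph{and} the unproven vanishing $\int_M\nu=0$ (Remark 2.9 of the paper), the Poincar\'e step would give only $\int_M S\geq n\,{\rm Vol}(M)$ with no control of the equality case, whose rigidity conclusion (equality iff $M$ is a minimal Clifford hypersurface) is of a completely different nature from the totally-geodesic rigidity your hemisphere argument produces. The Perdomo Conjecture is stated in the paper precisely because it remains open; your proposal should be read as a (correct, and essentially identical to the paper's) proof of the Main Theorem, not of the conjecture.
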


The equality assertion of Perdomo Conjecture \ref{Conjecture Perdomo 2004} for surfaces is equivalent to the Lawson
 Conjecture \cite{Lawson 1970}, i.e., \emph{The only embedded minimal torus in $\mathbb{S}^3$ is the Clifford torus.}
 This is because by the Gauss equation and the Gauss-Bonnet theorem, for genus $g$ minimal surface $M^2\subset\mathbb{S}^3$, we have
 $$\int_{M}S=8\pi\left( g-1\right) +2{\rm Vol}(M^2).$$
 Notice that the inequality also holds for genus $g\geq2$ surfaces, while for the $g=0$ case, it follows from Almgren \cite{Alm66} and Calabi \cite{Cal67} that any embedded minimal sphere in $\mathbb{S}^3$ is totally geodesic (which is not true for higher dimensions by Hsiang \cite{Hsi83}).
 The Lawson Conjecture has been proven by Brendle \cite{Brendle S 2013,Brendle S 2013 survey of recent results}.

For general dimension, Perdomo \cite{Perdomo 2004 average of the scalar curvature} also conjectured that  \emph{the only minimal immersed hypersurfaces in $\mathbb{S}^{n+1}$ with ${\rm Index}(M^n) = n + 3$ are the minimal Clifford tori.}
With an additional assumption on the symmetries of $M^n$, this conjecture was verified by Perdomo himself \cite{Perdomo 2001 Low index}.
In particular, if  Conjecture \ref{Conjecture Perdomo 2004} is true, then the conjecture above also holds for embedded hypersurfaces \cite{Perdomo 2004 average of the scalar curvature}.
In addition, 
Perdomo \cite{Perdomo 2004 Rigidity of minimal hypersurfaces} proved that the  condition  ``$M^n$ is embedded''  is needed in Conjecture \ref{Conjecture Perdomo 2004}. This is due to the fact that rotational minimal hypersurfaces (those with exactly two principal curvatures) satisfy the reverse inequality and it is known that none of these examples are embedded.
 For more relations between the Jacobi (or stability) operator and Perdomo's conjectures we refer to \cite{Alias Brasil and Barros 2005,Alias Brasil and Perdomo 2007,Perdomo 2001 Low index, Perdomo 2004 average of the scalar curvature}.
 Besides, assume there are $(n + 2)$ great
 hyperspheres of $\mathbb{S}^{n+1}$ perpendicular to each other, such that $M^n$ is symmetric with respect to them, then  Conjecture \ref{Conjecture Perdomo 2004} was verified by Wang and Wang \cite{C P Wang and P Wang 2020} very recently.
  There are some similar pinching results (cf. \cite{Gu J R  Xu H W Xu Z Y 2016,Jun-Min L Chang 1989,C. L. Shen 1989,Xu  Hong-Wei2006}).

 In fact, Theorem \ref{Corollary  gap  results of Perdomo Conjecture in the introduction} is a corollary of the general inequality below.
  \begin{thm}\label{thm  gap  results of Perdomo Conjecture in the introduction}
   Let $M^n$  be a  closed  embedded hypersurface in $\mathbb{S}^{n+1}$ and $\lambda_1(M)$ be the first positive eigenvalue of the Laplacian. Then
  $$
  \int_{M}S
  \geq
  \lambda_1(M)
  \frac{{\rm Vol}^2(M^n)-{\rm Vol}^2(\mathbb{S}^{n})}{{\rm Vol}(M^n)},
  $$
  where the equality holds if and only if $M^n$ is totally geodesic.
   \end{thm}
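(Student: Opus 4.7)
The plan is to apply the variational characterization of $\lambda_1(M)$ to the components of the Gauss map $\nu:M\to\mathbb{S}^{n+1}\subset\mathbb{R}^{n+2}$. Two pointwise identities drive the estimate: $\sum_{i=1}^{n+2}\nu_i^2\equiv 1$, and, from the Weingarten relation $d\nu(e_j)=-A(e_j)$ in an orthonormal tangent frame $\{e_j\}$ of $M$,
$$\sum_{i=1}^{n+2}|\nabla_M\nu_i|^2=|A|^2=S.$$
Write $V={\rm Vol}(M^n)$, $V_0={\rm Vol}(\mathbb{S}^n)$, $c_i:=V^{-1}\int_M\nu_i$, and $f_i:=\nu_i-c_i$. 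Each $f_i$ has zero mean and is admissible in the Rayleigh quotient, so $\lambda_1\int_M f_i^2\le\int_M|\nabla f_i|^2$. Summing over $i$ and expanding using the two identities above gives
$$\lambda_1 V(1-|c|^2)\le\int_M S,\qquad |c|^2=\frac{1}{V^2}\sum_{i=1}^{n+2}\Bigl(\int_M\nu_i\Bigr)^2.\qquad(\ast)$$

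To bound $|c|$, I would exploit embeddedness: $M$ separates $\mathbb{S}^{n+1}$ into two open regions, and we pick one as $\Omega$. Since $\Delta_{\mathbb{S}^{n+1}}x_i=-(n+1)x_i$ and a direct computation shows $\langle\nabla_{\mathbb{S}^{n+1}}x_i,\nu\rangle=\nu_i$ (with the sign determined by orienting $\nu$ outward from $\Omega$), the divergence theorem on $\Omega$ yields
$$\int_M\nu_i=-(n+1)\int_\Omega x_i,\qquad\text{hence}\qquad|c|=\frac{n+1}{V}\Bigl|\int_\Omega x\Bigr|.$$
The crux is then the inequality $\bigl|\int_\Omega x\bigr|\le V_0/(n+1)$. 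Rotating coordinates so $\int_\Omega x=(0,\dots,0,a)$ with $a\ge 0$, this reduces to $\int_\Omega x_{n+2}\le V_0/(n+1)$: the contribution from $\Omega\cap\{x_{n+2}<0\}$ is $\le 0$, while
$$\int_{\Omega\cap\{x_{n+2}>0\}}x_{n+2}\le\int_{\{x_{n+2}>0\}}x_{n+2}=\frac{V_0}{n+1}$$
by a one-line polar-coordinate calculation on $\mathbb{S}^{n+1}$.

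Plugging $|c|^2\le V_0^2/V^2$ into $(\ast)$ produces the stated inequality. For the equality clause, the ``if'' direction is immediate: a totally geodesic $M$ has $S\equiv 0$ and $V=V_0$, so both sides vanish. For the converse, the case $V<V_0$ is ruled out since the right-hand side would then be negative while $\int_M S\ge 0$, forcing $V=V_0$ and $\int_M S=0$, hence $S\equiv 0$; if $V\ge V_0$, equality propagates back to sharpness in the hemisphere step, which forces $\Omega=\{x_{n+2}>0\}$ up to measure zero, so $M$ is a great sphere and hence totally geodesic. The main technical care lies in the orientation/sign bookkeeping of the divergence identity; the hemisphere bound and the Rayleigh step are elementary.
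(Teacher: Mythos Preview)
Your proof is correct and follows essentially the same route as the paper. Both arguments use the components $\psi_{a}=\langle\nu,a\rangle$ of the Gauss map as Rayleigh-quotient test functions, invoke the identities $\sum_j\psi_{a_j}^2=1$ and $\sum_j|\nabla\psi_{a_j}|^2=S$, and then bound $\bigl|\int_M\psi_a\bigr|$ via the divergence theorem on a component $\Omega$ of $\mathbb{S}^{n+1}\setminus M$, using $\overline{\Delta}\varphi_a=-(n+1)\varphi_a$. Your ``rotate $\int_\Omega x$ into the last coordinate'' is exactly the paper's Lemma~2.5 (choosing an orthonormal basis with $\int_M\psi_{a_i}=0$ for $i\le n+1$); your hemisphere bound $\int_\Omega x_{n+2}\le V_0/(n+1)$ via positive/negative part splitting is a slight streamlining of the paper's step of picking $i_0\in\{1,2\}$ with $\int_{U_{i_0}}|\varphi_a|\le V_0/(n+1)$, but the content is the same, and the equality analyses coincide. (Your sentence on the $V<V_0$ equality case is a bit garbled---it simply cannot occur since the right side is then negative---but the $V\ge V_0$ branch, which is the paper's argument, already suffices.)
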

 The following bound of the first eigenvalue (which can be compared with the Yang-Yau inequality $\lambda_1(M) \leq \frac{8\pi(1+g)}{{\rm Vol}(M^2)}$ for genus $g$ (minimal) surfaces, see \cite{Schoen Richard and S. T. Yau 1994}), and the Simons-type pinching result for general (not only minimal) closed hypersurfaces are immediate corollaries of Theorem \ref{thm  gap  results of Perdomo Conjecture in the introduction}.
   \begin{cor}\label{Corollary  lambda1M  results of embedded  hypersurface in the introduction}
 Let $M^n$  be a  closed  embedded hypersurface in $\mathbb{S}^{n+1}$. If
 ${\rm Vol}(M^n)> {\rm Vol}(\mathbb{S}^{n})$,
 then
 $$
    \lambda_1(M)<
   \frac{{\rm Vol}(M^n)\int_{M}S}{{\rm Vol}^2(M^n)-{\rm Vol}^2(\mathbb{S}^{n})}.
   $$
 \end{cor}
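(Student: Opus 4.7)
The plan is to deduce this directly from Theorem \ref{thm  gap  results of Perdomo Conjecture in the introduction}, which is already an inequality of exactly the right shape; only the strict inequality sign and a division by a positive quantity are needed.

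First, I would observe that the hypothesis ${\rm Vol}(M^n)>{\rm Vol}(\mathbb{S}^{n})$ has two useful consequences. On the one hand, the quantity ${\rm Vol}^2(M^n)-{\rm Vol}^2(\mathbb{S}^{n})$ appearing in the denominator is strictly positive, so dividing by it preserves the direction of any inequality. On the other hand, since every totally geodesic hypersurface of $\mathbb{S}^{n+1}$ is a great hypersphere with volume exactly ${\rm Vol}(\mathbb{S}^{n})$, the hypothesis forces $M^n$ to be \emph{not} totally geodesic.

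Next, I would invoke Theorem \ref{thm  gap  results of Perdomo Conjecture in the introduction}: for any closed embedded hypersurface $M^n\subset\mathbb{S}^{n+1}$,
\begin{equation*}
\int_{M}S \;\geq\; \lambda_1(M)\,\frac{{\rm Vol}^2(M^n)-{\rm Vol}^2(\mathbb{S}^{n})}{{\rm Vol}(M^n)},
\end{equation*}
with equality if and only if $M^n$ is totally geodesic. Because we have just ruled out the totally geodesic case, the equality clause upgrades this to the strict inequality
\begin{equation*}
\int_{M}S \;>\; \lambda_1(M)\,\frac{{\rm Vol}^2(M^n)-{\rm Vol}^2(\mathbb{S}^{n})}{{\rm Vol}(M^n)}.
\end{equation*}

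Finally, since $\frac{{\rm Vol}^2(M^n)-{\rm Vol}^2(\mathbb{S}^{n})}{{\rm Vol}(M^n)}>0$ by the first observation, I would divide through to conclude
\begin{equation*}
\lambda_1(M) \;<\; \frac{{\rm Vol}(M^n)\int_{M}S}{{\rm Vol}^2(M^n)-{\rm Vol}^2(\mathbb{S}^{n})},
\end{equation*}
which is exactly the stated bound. There is no real obstacle here; the only subtle point is to justify the strict inequality, which is handled by combining the equality characterization of Theorem \ref{thm  gap  results of Perdomo Conjecture in the introduction} with the observation that a totally geodesic hypersurface cannot satisfy ${\rm Vol}(M^n)>{\rm Vol}(\mathbb{S}^{n})$.
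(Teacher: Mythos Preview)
Your proposal is correct and matches the paper's approach: the paper simply states that this corollary is an ``immediate corollary'' of Theorem~\ref{thm  gap  results of Perdomo Conjecture in the introduction}, and you have filled in precisely the expected details (positivity of the denominator, strictness from the equality clause, and division).
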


 \begin{cor}\label{Corollary  gap  results of embedded (totally geodesic) hypersurface in the introduction}
 Let $M^n$  be a  closed  embedded hypersurface in $\mathbb{S}^{n+1}$. If
  $$
 \int_{M}S
  \leq
  \lambda_1(M)
  \frac{{\rm Vol}^2(M^n)-{\rm Vol}^2(\mathbb{S}^{n})}{{\rm Vol}(M^n)},
  $$
 then $ M^n$ is totally geodesic.
 \end{cor}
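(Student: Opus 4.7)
The plan is to read this as an immediate corollary of Theorem \ref{thm  gap  results of Perdomo Conjecture in the introduction}, which provides the matching lower bound on $\int_M S$ together with a characterization of the equality case. First I would invoke Theorem \ref{thm  gap  results of Perdomo Conjecture in the introduction} on $M^n$, which applies because $M^n$ is assumed to be a closed embedded hypersurface of $\mathbb{S}^{n+1}$; this yields
$$\int_{M}S \geq \lambda_1(M)\frac{{\rm Vol}^2(M^n)-{\rm Vol}^2(\mathbb{S}^{n})}{{\rm Vol}(M^n)}.$$
The hypothesis of the corollary is precisely the reverse inequality, so chaining the two gives equality throughout. The equality clause of Theorem \ref{thm  gap  results of Perdomo Conjecture in the introduction} then forces $M^n$ to be totally geodesic, finishing the proof.

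For completeness I would also note why the ``trivial'' sub-case does not cause trouble. If ${\rm Vol}(M^n)<{\rm Vol}(\mathbb{S}^n)$, the right-hand side of the assumed inequality is strictly negative, while $S\geq 0$ forces $\int_M S\geq 0$; the hypothesis therefore cannot hold and there is nothing to prove. If ${\rm Vol}(M^n)={\rm Vol}(\mathbb{S}^n)$, the hypothesis gives $\int_M S\leq 0$, hence $S\equiv 0$, and $M^n$ is totally geodesic directly. The remaining case ${\rm Vol}(M^n)>{\rm Vol}(\mathbb{S}^n)$ is exactly where Theorem \ref{thm  gap  results of Perdomo Conjecture in the introduction} provides strictly positive content, and the squeeze argument above applies.

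There is essentially no obstacle at this stage: all of the analytic work — the use of $\lambda_1(M)$, the comparison of volumes with ${\rm Vol}(\mathbb{S}^n)$, and the identification of the equality case with the totally geodesic hypersurface — is already packaged inside Theorem \ref{thm  gap  results of Perdomo Conjecture in the introduction}. The corollary is a one-line logical consequence, and the only care needed is to cite the equality clause rather than merely the inequality.
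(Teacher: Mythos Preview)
Your argument is correct and matches the paper's approach exactly: the paper states this result as an immediate corollary of Theorem~\ref{thm  gap  results of Perdomo Conjecture in the introduction} without giving a separate proof, relying precisely on the inequality together with its equality characterization. Your additional case analysis on ${\rm Vol}(M^n)$ versus ${\rm Vol}(\mathbb{S}^n)$ is harmless but unnecessary, since Theorem~\ref{thm  gap  results of Perdomo Conjecture in the introduction} already covers all closed embedded hypersurfaces uniformly.
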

  Cheng-Li-Yau \cite{Cheng Li Yau 1984 Heat equations} proved in 1984 that \emph{if $M^n$ is a closed immersed minimal hypersurface in  $\mathbb{S}^{n+1}$ and $M^n$ is non-totally geodesic, then there is  a positive constant $c(n)>0$, depending only on $n$, such that the area  of  $M^n$ satisfies
  $$
  {\rm Vol}(M^n)>\left( 1+c(n)\right) {\rm Vol}(\mathbb{S}^n).
  $$}
  Thus, Theorem \ref{Corollary  gap  results of Perdomo Conjecture in the introduction} follows from Theorem \ref{thm  gap  results of Perdomo Conjecture in the introduction} as $\lambda_1(M)\geq\frac{n}{2}$ by Choi and Wang \cite{Choi Wang 1983}.

Throughout this paper, we denote by $S$ the squared length of the second fundamental form and its maximum and minimum by
  $$
  S_{\max}=\sup_{p\in M^n}S(p),\ \
  S_{\min}=\inf_{p\in M^n}S(p).
  $$
For immersed case we also obtain similar inequalities and pinching results as follows.
 \begin{thm}\label{thm  some results of Perdomo Conjecture in the introduction}
 Let $M^n$  be a  closed  immersed
  minimal  hypersurface in $\mathbb{S}^{n+1}$. 
\begin{itemize}
\item [(i)]  If $S\not\equiv 0$, then
$$ \int_{M}S\geq
 \left( \lambda_1(M)-\frac{2(n-1)}{\lambda_1(M)(2n-1)}\left(S_{\max}-n \right) \left( S_{\max}-S_{\min}\right) \right){\rm{Vol}}(M^n).$$
\item [(ii)] The following Simons-type inequality holds:
$$
\begin{aligned}
\int_{M}S^2
&
\geq
\frac{n}{n-1}\int_{M}S
\left( \frac{2n-1}{n}\lambda_1(M)-\frac{\int_{M}S}{{\rm{Vol}}(M^n)  }\right).
\end{aligned}$$
If $S\not\equiv 0$, then
$$
n\int_{M}S+(n-1)S_{\max}{\rm{Vol}}(M^n)\geq (2n-1)\lambda_1(M){\rm{Vol}}(M^n).
$$
\end{itemize}
\end{thm}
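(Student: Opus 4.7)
For a closed immersed minimal hypersurface $M^n\subset \mathbb{S}^{n+1}$, my plan is to combine three standard tools: Simons' identity $\frac{1}{2}\Delta S = |\nabla A|^2 + S(n-S)$, whose integration yields $\int_M|\nabla A|^2 = \int_M(S^2-nS)$; a Kato-type inequality $|\nabla A|^2 \geq c_n|\nabla|A||^2$, whose sharp constant for trace-free Codazzi tensors is expected to supply the numerical factor $\frac{2n-1}{2(n-1)}$ appearing in both (i) and (ii); and the Rayleigh characterization $\lambda_1(M)\int_M(f-\bar f)^2 \leq \int_M|\nabla f|^2$ of the first positive eigenvalue.

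For the Simons-type inequality (ii), I would apply the Rayleigh estimate to the test function $f = |A|$, whose natural mean I denote $\overline{|A|}$, obtaining
$$\lambda_1(M)\Bigl(\int_M S - \mathrm{Vol}(M)\,\overline{|A|}^2\Bigr) \leq \int_M|\nabla|A||^2.$$
Substituting the Kato inequality together with the integrated Simons identity on the right-hand side produces a lower bound of the form $\int_M S^2 \geq (n+c_n\lambda_1)\int_M S - c_n\lambda_1\mathrm{Vol}(M)\,\overline{|A|}^2$. The factors $\frac{n}{n-1}$ and $\frac{2n-1}{n}$ in (ii) should then emerge from a careful replacement of $\overline{|A|}^2$ by $\bar S^2 = \bigl(\int_M S/\mathrm{Vol}(M)\bigr)^2$ via Cauchy-Schwarz, followed by a rearrangement collecting $\int_M S^2$ on the left. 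The second inequality in (ii) is immediate from the first via $\int_M S^2 \leq S_{\max}\int_M S$.

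For part (i), I would plug the algebraic upper bound
$$\int_M S^2 \leq (S_{\min}+S_{\max})\int_M S - S_{\min}S_{\max}\mathrm{Vol}(M),$$
obtained by integrating the pointwise inequality $(S-S_{\min})(S_{\max}-S)\geq 0$, into the first inequality of (ii). The result is a quadratic inequality in $\bar S = \int_M S/\mathrm{Vol}(M)$; solving for the larger root of the associated quadratic and simplifying should produce the explicit lower bound stated in (i).

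The main obstacle is the delicate constant matching in (ii). The naive Kato $|\nabla A|^2 \geq |\nabla|A||^2$ combined with the trivial Cauchy-Schwarz $\overline{|A|}^2 \leq \bar S$ only recovers Simons' classical $\int_M S^2 \geq n\int_M S$; to reach the sharp constants $\frac{n}{n-1}$ and $\frac{2n-1}{n}$ one must invoke a strictly refined Kato-type inequality adapted to the trace-free and Codazzi properties of $\nabla A$ on a minimal hypersurface in a space form, and execute the Cauchy-Schwarz step so that no efficiency is lost. Verifying that the resulting larger root of the quadratic in (i) simplifies exactly to $\lambda_1 - \frac{2(n-1)(S_{\max}-n)(S_{\max}-S_{\min})}{\lambda_1(2n-1)}$ is then the core algebraic step.
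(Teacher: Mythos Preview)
Your approach has a genuine gap: the constants will not come out right, and the structural mismatch cannot be fixed by sharpening Kato. The refined Kato inequality for a trace-free Codazzi tensor on $M^n$ is $|\nabla A|^2\ge\frac{n+2}{n}\,|\nabla|A||^2$, not $\frac{2n-1}{2(n-1)}$; and once you apply Rayleigh to $f=|A|$ you obtain an inequality of the shape
\[
\int_M S^2 \;\ge\; (n+c_n\lambda_1)\int_M S \;-\; c_n\lambda_1\,\mathrm{Vol}(M)\,\overline{|A|}^{\,2},
\]
whose subtracted term involves $\overline{|A|}^{\,2}=(\mathrm{Vol}(M)^{-1}\int_M|A|)^2$, not $\bar S=\mathrm{Vol}(M)^{-1}\int_M S$. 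Replacing $\overline{|A|}^{\,2}$ by $\bar S$ via Cauchy--Schwarz goes the \emph{wrong} way (it makes the right-hand side smaller), and you yourself note that the naive version collapses to Simons' inequality. No Kato-type sharpening repairs this, because the extra additive $n$ in your coefficient $(n+c_n\lambda_1)$ has no counterpart in the target $\frac{2n-1}{n-1}\lambda_1$.

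The paper's argument is extrinsic and does not use Simons' identity or Kato at all for~(ii). It applies the Rayleigh quotient to the height functions $\psi_a=\langle\nu,a\rangle$, which on a minimal hypersurface satisfy $\Delta\psi_a=-S\psi_a$; by Lemma~2.5 one can choose an orthonormal basis $\{a_j\}$ with $\int_M\psi_{a_j}=0$ for $j\le n+1$, and then sum using $\sum_j\psi_{a_j}^2=1$ together with a separate lower bound for $\int_M S\psi_{a_{n+2}}^2$ (Lemma~2.3). The factor $\frac{n-1}{2n-1}$ arises from Lemma~2.4, namely the pointwise principal-curvature bound $\lambda_{\max}^2\le\frac{n-1}{n}S$ for trace-free $A$, which yields $\int_M S\psi_a^2\le\frac{n-1}{2n-1}\int_M S$. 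Part~(i) is then obtained not by plugging $(S-S_{\min})(S_{\max}-S)\ge0$ into~(ii), but by returning to the pre-(ii) inequality and bounding the variance $\int_M S^2-\mathrm{Vol}(M)^{-1}(\int_M S)^2$ via Lemma~2.2 (Rayleigh applied to $u=S-\bar S$, combined with Simons' identity), followed by the pointwise estimate $(S-n)(S-S_{\min})\le(S_{\max}-n)(S_{\max}-S_{\min})$, valid once $S_{\min}\le n\le S_{\max}$.
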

\begin{cor}\label{Corollary the week result of Perdomo Conjecture}
Let $ M^n$ be a closed   immersed  minimal hypersurface  in  $\mathbb{S}^{n+1}$. If
\begin{itemize}
\item[(i)]  $
\int_{M}S
\leq
 \frac{(2n-1)\lambda_1^2(M)}{(2n-1)\lambda_1(M)+2n(n-1)}{\rm{Vol}}(M^n);$
\item[(ii)]  $S_{\max}-S_{\min}< n;$
\end{itemize}
then   $M^n$ is totally geodesic.
\end{cor}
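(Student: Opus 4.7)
The plan is to argue by contradiction, using Theorem~\ref{thm  some results of Perdomo Conjecture in the introduction}(i) together with a simple observation about the mean value of $S$. Suppose $M^n$ is not totally geodesic, so that $S\not\equiv 0$, and set $I:={\rm Vol}(M^n)^{-1}\int_M S>0$. Since $I$ is the average of $S$, the elementary bounds $S_{\min}\le I\le S_{\max}$ hold.

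The crucial step is to extract from hypothesis (ii) the strict inequality $S_{\max}-n<I$: indeed $S_{\max}<S_{\min}+n\le I+n$. With this in hand I would apply Theorem~\ref{thm  some results of Perdomo Conjecture in the introduction}(i), which (since $S\not\equiv 0$) gives
$$I\;\ge\;\lambda_1(M)-\frac{2(n-1)}{\lambda_1(M)(2n-1)}\bigl(S_{\max}-n\bigr)\bigl(S_{\max}-S_{\min}\bigr),$$
and split according to the sign of $S_{\max}-n$. If $S_{\max}\le n$ the right-hand product is non-positive, so $I\ge\lambda_1(M)$; since $2n(n-1)\lambda_1(M)>0$ one checks immediately that $\lambda_1(M)>\frac{(2n-1)\lambda_1^2(M)}{(2n-1)\lambda_1(M)+2n(n-1)}$, contradicting hypothesis (i).

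If instead $S_{\max}>n$, both factors $(S_{\max}-n)$ and $(S_{\max}-S_{\min})$ are positive, and multiplying the two strict bounds $(S_{\max}-n)<I$ and $(S_{\max}-S_{\min})<n$ yields $(S_{\max}-n)(S_{\max}-S_{\min})<nI$. Substituting back into Theorem~\ref{thm  some results of Perdomo Conjecture in the introduction}(i) gives $I>\lambda_1(M)-\frac{2n(n-1)}{\lambda_1(M)(2n-1)}I$, which rearranges to
$$I\;>\;\frac{(2n-1)\lambda_1^2(M)}{(2n-1)\lambda_1(M)+2n(n-1)},$$
again contradicting (i). In either case the assumption that $M^n$ is not totally geodesic is untenable.

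The step I expect to be the main obstacle is precisely the identification of $S_{\max}-n<I$ from hypothesis (ii): without it, condition (ii) alone only provides the weaker bound $(S_{\max}-n)(S_{\max}-S_{\min})\le n(S_{\max}-n)$, which leaves $S_{\max}$ as a free parameter, and the resulting system obtained from Theorem~\ref{thm  some results of Perdomo Conjecture in the introduction}(i) and (ii) does not close tightly enough to contradict (i). The averaging remark $S_{\min}\le I$ is what eliminates $S_{\max}$ from the estimate and makes the cancellation pair up exactly with the constant $\frac{(2n-1)\lambda_1^2(M)}{(2n-1)\lambda_1(M)+2n(n-1)}$ appearing in hypothesis (i).
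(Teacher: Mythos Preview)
Your argument is correct and follows essentially the same route as the paper: assume $S\not\equiv0$, invoke Theorem~\ref{thm  some results of Perdomo Conjecture in the introduction}(i), and use condition~(ii) together with the trivial bound $S_{\min}\le I$ to replace $(S_{\max}-n)(S_{\max}-S_{\min})$ by $nI$, then rearrange to contradict~(i). The only cosmetic differences are that the paper writes the key bound as $(S_{\max}-n)<S_{\min}$ before passing to $S_{\min}\le I$ (rather than combining them into $S_{\max}-n<I$ at once), and handles both sign cases for $S_{\max}-n$ in a single chain of inequalities instead of splitting explicitly.
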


\begin{rem}
If $M^n$ is embedded, then  by Choi and Wang \cite{Choi Wang 1983} we can replace $\lambda_1(M)$ with $n/2$
   in Theorems $\ref{thm  gap  results of Perdomo Conjecture in the introduction}$ and $\ref{thm  some results of Perdomo Conjecture in the introduction}$, Corollaries $\ref{Corollary  gap  results of embedded (totally geodesic) hypersurface in the introduction}$ and $\ref{Corollary the week result of Perdomo Conjecture}$. These results can be improved further by replacing $\lambda_1(M)$ with $n$ if the following Yau Conjecture is true.
\end{rem}

 \begin{conj}[\textbf{Yau Conjecture \cite{Yau  1982,Yau  1992}}] \label{conj  Yau 1982}
~~
\begin{itemize}
\item[(i)] Let $ M^n$ be a closed   embedded minimal hypersurface of $\mathbb{S}^{n+1}$, then
 $\lambda_1 (M)=n$.
 \item[(ii)] The area of one of the minimal Clifford tori  gives the lowest value of area among all non-totally geodesic closed minimal hypersurfaces of $\mathbb{S}^{n+1}$.
  \end{itemize}
 \end{conj}
 In particular, Tang and Yan \cite{TY13} proved Yau Conjecture \ref{conj  Yau 1982} (i) in the isoparametric case.
There are  some  classical results on the first  eigenvalue for minimal hypersurfaces in spheres (cf. \cite{Brendle S 2013 survey of recent results,J Choe 2006,J Choe and M Soret  2009,Choi Wang 1983,TXY14}, etc.).
For (ii) of Yau Conjecture \ref{conj  Yau 1982} (also called the Solomon-Yau Conjecture \cite{Ge19}), among minimal rotational hypersurfaces Perdomo and Wei \cite{Perdomo and  Wei 2015}  showed numerical evidences that it is true if $2\leq n\leq 100$ and
 Cheng-Wei-Zeng
\cite{Cheng Qing Ming Guoxin Wei and Yuting Zeng 2019}  showed in all dimensions. Remarkably, in the asymptotic sense, Ilmanen-White \cite{IW15} verified the Solomon-Yau Conjecture in the class of topologically nontrivial hypercones.

\section {Preliminary lemmas and a Simons-type pinching result}
In this section, we will give some necessary lemmas 
which also lead to a Simons-type pinching result (see Proposition \ref{prop A piniching  Inequality of minimal hypersurfaces in the spheres}).

Let $x: M^n\looparrowright    \mathbb{S}^{n+1} \subset \mathbb{R}^{n+2}$ be a closed immersed hypersurface  in the unit sphere  $\mathbb{S}^{n+1}$ and  $\nu(x)$ denote the unit normal vector field,
$\nabla$ and $\overline\nabla$  be the Levi-Civita connections on $M^n$ and  $\mathbb{S}^{n+1}$,  respectively. Following \cite{Ge Li 2020}, for any unit vector $a \in \mathbb{S}^{n+1}$, we consider the height functions on $M^n$,
$$\varphi_a(x) = \langle x,a \rangle,\quad  \psi_a (x)=\langle \nu,a \rangle.$$
Then we have the following basic properties.
\begin{prop} \cite{Ge Li 2020, Nomizu and Smyth 1969} \label{prop funda}
For all $a\in \mathbb{S}^{n+1}$, we have
$$\begin{array}{lll}
\nabla \varphi_a=a^{\rm T},&
\nabla \psi_a =-Aa^{\rm T},\\
\Delta \varphi_a=-n\varphi_a+nH\psi_a ,&
\Delta \psi_a =-n\left\langle \nabla H, a \right\rangle +nH\varphi_a -S\psi_a. \\
\end{array}$$
 where  $a^{\rm T}\in \Gamma(TM)$ denotes the tangent component of $a$ along $M^n$, $A$ is the shape operator with respect to $\nu$, i.e., $A(X)=-\overline\nabla_X\nu$, $S=\|A\|^2=\tr AA^t$, $H = \frac{1}{n} \tr A$ is the mean curvature.
\end{prop}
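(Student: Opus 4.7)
The plan is to exploit the orthogonal decomposition of the constant ambient vector $a\in\mathbb{R}^{n+2}$ along the chain $M^n\subset \mathbb{S}^{n+1}\subset\mathbb{R}^{n+2}$:
$$a=a^{\rm T}+\psi_a\,\nu+\varphi_a\,x,$$
which is valid since $\nu$ and $x$ span the rank-two normal bundle of $M^n$ in $\mathbb{R}^{n+2}$ and are mutually orthogonal unit vectors. All four identities then drop out by differentiating this decomposition once or twice and invoking the Gauss and Weingarten formulas (twice, once for $M^n\subset\mathbb{S}^{n+1}$ and once for $\mathbb{S}^{n+1}\subset\mathbb{R}^{n+2}$), with Codazzi required only for the last identity.

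First I would establish the gradient formulas. Since $a$ is parallel in $\mathbb{R}^{n+2}$, for any $X\in TM$ one has $X(\varphi_a)=\langle X,a\rangle=\langle X,a^{\rm T}\rangle$, giving $\nabla\varphi_a=a^{\rm T}$. For $\psi_a$, since $\nu\perp x$, the flat and spherical derivatives of $\nu$ agree, so $X(\psi_a)=\langle\overline\nabla_X\nu,a\rangle=-\langle A X,a\rangle=-\langle X,A\,a^{\rm T}\rangle$ by self-adjointness of $A$, yielding $\nabla\psi_a=-A\,a^{\rm T}$.

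Next I would compute the Laplacians in a local orthonormal frame $\{e_i\}$ geodesic at a point $p\in M^n$. For $\Delta\varphi_a$, the two Gauss formulas combine at $p$ to give
$$\Delta\varphi_a=\sum_i\langle D_{e_i}e_i,a\rangle=\sum_i\langle A(e_i),e_i\rangle\psi_a-\sum_i\langle e_i,e_i\rangle\varphi_a=nH\psi_a-n\varphi_a,$$
where $D$ denotes the flat connection on $\mathbb{R}^{n+2}$. The only delicate step is $\Delta\psi_a$. Starting from $\nabla\psi_a=-A\,a^{\rm T}$, its divergence splits as
$$\Delta\psi_a=-\sum_i\langle(\nabla_{e_i}A)(a^{\rm T}),e_i\rangle-\sum_i\langle A(\nabla_{e_i}a^{\rm T}),e_i\rangle.$$
The first sum is handled by the Codazzi equation in the constant-curvature ambient $\mathbb{S}^{n+1}$, which makes $(\nabla_X A)(Y)$ totally symmetric in $X,Y$; contracting gives $a^{\rm T}(\tr A)=n\langle\nabla H,a^{\rm T}\rangle=n\langle\nabla H,a\rangle$. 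For the second sum, differentiating $a=a^{\rm T}+\psi_a\nu+\varphi_a x$ with respect to $e_i$ in $\mathbb{R}^{n+2}$ (using $D_{e_i}\nu=-A(e_i)$ and $D_{e_i}x=e_i$) and projecting tangentially produces $\nabla_{e_i}a^{\rm T}=\psi_a A(e_i)-\varphi_a e_i$ at $p$, so the second sum equals $\psi_a\tr(A^2)-\varphi_a\tr A=\psi_a S-nH\varphi_a$. Combining the two pieces yields exactly $\Delta\psi_a=-n\langle\nabla H,a\rangle+nH\varphi_a-S\psi_a$.

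I expect the only non-routine points to be the invocation of Codazzi to symmetrize $(\nabla_X A)(Y)$ and the bookkeeping of the two normal contributions ($\nu$- and $x$-directions) in the computation of $\nabla_{e_i}a^{\rm T}$; everything else is a direct application of the ambient decomposition together with the Gauss--Weingarten formulas.
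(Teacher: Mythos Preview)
Your proof is correct. Note that the paper does not actually prove this proposition; it is stated with citations to \cite{Ge Li 2020, Nomizu and Smyth 1969} and used as a known result, so there is nothing to compare against. Your argument via the decomposition $a=a^{\rm T}+\psi_a\,\nu+\varphi_a\,x$ together with the Gauss--Weingarten formulas and Codazzi is exactly the standard derivation, and all four identities are obtained correctly.
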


\begin{lem} {\rm\textbf{$($Choi and Wang \label{lem  the first Dirichlet eigenvalue of the Laplacian Choi Wang 1983} \cite{Choi Wang 1983}$)$}}
Let $ M^n$ be a closed embedded minimal hypersurface in $\mathbb{S}^{n+1}$, then
$\lambda_1 (M)\geq n/2$. 
\end{lem}

A careful argument (see \cite[ Theorem 5.1]{Brendle S 2013 survey of recent results}) shows that the strict inequality holds, i.e., $\lambda_1 (M)> n/2$ in Lemma \ref{lem  the first Dirichlet eigenvalue of the Laplacian Choi Wang 1983}.

\begin{lem}\label{lem A similar Simons Inequality of minimal hypersurfaces in the spheres}
Let $ M^n$ be a closed   immersed minimal hypersurface  in  $\mathbb{S}^{n+1}$, then
\begin{equation*}
\begin{aligned}
\int_{M}S\left(S-n\right)
(S-S_{\min})
&\geq
\frac{\lambda_1(M)}{2}\left( \int_{M}S^2-
\frac{\left( \int_{M}S\right) ^2}
{{\rm Vol}(M^n)}\right).
\end{aligned}
\end{equation*}
Moreover, if $M^n$ is  embedded, 
then $$
\int_{M}S\left(S-n\right)
(S-S_{\min})
\geq
\frac{n}{4}\left( \int_{M}S^2-
\frac{\left( \int_{M}S\right) ^2}
{{\rm Vol}(M^n)}\right).
$$
\end{lem}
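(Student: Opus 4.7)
The plan is to combine Simons' classical pointwise identity with the Poincaré inequality for the first eigenvalue. For a minimal hypersurface in $\mathbb{S}^{n+1}$, Simons' identity reads
$$\tfrac{1}{2}\Delta S = |\nabla A|^2 + S(n-S),$$
which rearranges as $S(S-n) = |\nabla A|^2 - \tfrac{1}{2}\Delta S$. The key observation is that $S - S_{\min} \geq 0$ is a nonnegative weight we can multiply by without changing signs.

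First I would multiply the rearranged Simons identity by $S - S_{\min}$ and integrate over $M^n$:
$$\int_{M} S(S-n)(S-S_{\min}) = \int_{M}(S-S_{\min})|\nabla A|^2 - \tfrac{1}{2}\int_{M}(S-S_{\min})\Delta S.$$
Since $S_{\min}$ is a constant, integration by parts gives $-\int_{M}(S-S_{\min})\Delta S = \int_M |\nabla S|^2$. Dropping the manifestly nonnegative term $\int_M (S-S_{\min})|\nabla A|^2$ yields
$$\int_{M} S(S-n)(S-S_{\min}) \;\geq\; \tfrac{1}{2}\int_{M}|\nabla S|^2.$$

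Next I would apply the Poincaré (variational) characterization of $\lambda_1(M)$ to the mean-zero function $S - \bar S$, where $\bar S := \tfrac{1}{{\rm Vol}(M^n)}\int_M S$. Since $\nabla(S-\bar S) = \nabla S$,
$$\int_{M}|\nabla S|^2 \;\geq\; \lambda_1(M)\int_{M}(S-\bar S)^2 \;=\; \lambda_1(M)\left(\int_{M} S^2 - \frac{(\int_M S)^2}{{\rm Vol}(M^n)}\right).$$
Combining the two displayed inequalities gives the first assertion of the lemma. The embedded statement is then an immediate consequence of the Choi--Wang bound $\lambda_1(M) > n/2$ recalled in Lemma 2.2.

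This approach is essentially a clean two-line calculation once the weight $S - S_{\min}$ is chosen, so there is no serious obstacle; the only subtle point is recognizing that weighting Simons' identity by the nonnegative quantity $S - S_{\min}$ (rather than by $1$, which reproduces the classical $\int_M S(S-n)\geq 0$) produces exactly the gradient term $\tfrac{1}{2}|\nabla S|^2$ after integration by parts, which is precisely what is needed to feed into the Poincaré inequality and relate the expression to $\lambda_1(M)$ and the variance of $S$.
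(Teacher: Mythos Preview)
Your proof is correct and follows essentially the same route as the paper's: both combine Simons' identity, the bound $S\geq S_{\min}$ against the nonnegative $|\nabla A|^2$ term, integration by parts, and the Poincar\'e inequality for $S-\bar S$. The only cosmetic difference is that you multiply the Simons identity by the weight $S-S_{\min}$ at the outset, whereas the paper multiplies by $S$, integrates by parts, and then invokes $S|\nabla h|^2\geq S_{\min}|\nabla h|^2$ together with $\int_M|\nabla h|^2=\int_M S(S-n)$; the resulting inequality $\int_M S(S-n)(S-S_{\min})\geq\tfrac12\int_M|\nabla S|^2$ is identical in both cases.
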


\begin{proof}
Without loss of generality,
we suppose  $S$ is not a constant on $M^n$.
Let  $\mathcal{F}$ be  the set of non-constant   functions
$f: M^n\to \mathbb{R}$ with $\int_{M}f=0$ and $f\in H^1(M)$. Recall 
 the Simons identity (cf. \cite{Simons68})
\begin{equation}\label{equation Simons equation}
\frac{1}{2}\Delta S=|\nabla A|^2+S(n-S).
\end{equation}
Let $u=S-\frac{\int_{M}S}{{\rm Vol}(M)}\in\mathcal{F}$, one has
$$
\begin{aligned}
\lambda_1( M)
&=\inf_{\substack {f\in \mathcal{F}}}
 \frac{\int_{M}
|\nabla f|^2 }{\int_{M} f^2 }
\leq \frac{\int_{M}
|\nabla u|^2 }{\int_{M} u^2 }\\
&=
 \frac{\int_{M}
|\nabla S|^2 }{\int_{M} u^2 }
= \frac{-\int_{M}
S\Delta S }{\int_{M} u^2 }\\
&=\frac{-2\int_{M}
S\left(  |\nabla A|^2+S(n-S)  \right)  }{\int_{M} u^2}\\
&\leq\frac{-2\int_{M}
\left( S_{\min} |\nabla A|^2+S^2(n-S) \right)  }{\int_{M} u^2 }\\
&=\frac{2\int_{M}S\left(S-n\right)
(S-S_{\min})
}
{\int_{M} u^2 },
\end{aligned}
$$
and 
\begin{equation}\label{equation cauchy inequ}
\int_{M} u^2=\int_{M}S^2-
\frac{\left( \int_{M}S\right) ^2}
{{\rm Vol}(M^n)}
\geq 0.
\end{equation}
This proves the first inequality and the second follows from Lemma \ref{lem  the first Dirichlet eigenvalue of the Laplacian Choi Wang 1983}.
\end{proof}

From  Lemma \ref{lem A similar Simons Inequality of minimal hypersurfaces in the spheres}, we have the following pinching result immediately.
\begin{prop}\label{prop A piniching  Inequality of minimal hypersurfaces in the spheres}
Let $ M^n$ be a closed  embedded  minimal hypersurface  in  $\mathbb{S}^{n+1}$. If
\begin{itemize}
\item[(i)]  ${\int_{M} S }\leq n {\rm Vol}(M^n)$;
\item[(ii)]  ${\int_{M} S^2\left( S-n\right)  }\leq
S_{\max} {\int_{M} S\left( S-n\right)  }$;
\item[(iii)]  $S_{\max}-S_{\min}< \frac{n}{4}$;
\end{itemize}
then   $S\equiv n$ or $S\equiv 0$, i.e., $M^n$ is a minimal Clifford torus or an equator.  Moreover,  if Yau  Conjecture $\ref{conj  Yau 1982}$ $(i)$ is true,
condition $(iii)$ can be replaced by
$$S_{\max}-S_{\min}< \frac{n}{2}.$$
\end{prop}
\begin{proof}
By the Simons identity (\ref{equation Simons equation}) and condition $(i)$, we have
$$
\begin{aligned}
\int_{M}S^2-
\frac{\left( \int_{M}S\right) ^2}
{{\rm Vol}(M^n)}
&=
n\int_{M}S-\frac{\left( \int_{M}S\right) ^2}
{{\rm Vol}(M^n)}+\int_{M} |\nabla A|^2  \geq\int_{M} |\nabla A|^2.
\end{aligned}
$$
Then it follows from Lemma  \ref{lem A similar Simons Inequality of minimal hypersurfaces in the spheres}, condition $(ii)$ and (\ref{equation Simons equation}) that
$$
\int_{M}S\left(S-n\right)
(S_{\max}-S_{\min}-\frac{n}{4})
=
(S_{\max}-S_{\min}-\frac{n}{4})
\int_{M} |\nabla A|^2
\geq 0.
$$
Therefore by condition $(iii)$, $|\nabla A|\equiv 0$ and $S\equiv n$ or $S\equiv 0$, i.e., $M^n$ is a  minimal Clifford torus or an equator.
The proof is similar
if Yau  Conjecture \ref{conj  Yau 1982} (i)  is true.
\end{proof}

\begin{lem}  \label{lem  the lower bonud of psiS}
Let $ M^n$ be a closed   immersed, non-totally geodesic,   minimal hypersurface  in  $\mathbb{S}^{n+1}$, then for all $a\in \mathbb{S}^{n+1}$, we have
\begin{equation*}
\int_{M}S\psi_{a}^2
\geq\lambda_1(M)
\frac{\left( \int_{M}S\right)^2 \int_{M}\psi_{a}^2}{ {\rm Vol}(M^n)\int_{M}S^2}.
\end{equation*}
\end{lem}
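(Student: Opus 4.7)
My plan has three ingredients, all flowing from the minimal case ($H=0$) of Proposition \ref{prop funda}, which gives $\Delta \psi_a = -S\psi_a$. First I would integrate this identity over the closed manifold $M^n$, and then integrate by parts against $\psi_a$, to obtain the orthogonality and energy identities
$$\int_M S\psi_a \;=\; -\int_M \Delta\psi_a \;=\; 0, \qquad \int_M S\psi_a^2 \;=\; -\int_M \psi_a\,\Delta\psi_a \;=\; \int_M |\nabla\psi_a|^2.$$

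Next I would apply the Poincar\'e--Wirtinger inequality to the zero-mean test function $\psi_a - \bar\psi$, where $\bar\psi := \frac{1}{V}\int_M \psi_a$ and $V := {\rm Vol}(M^n)$. Since $|\nabla(\psi_a - \bar\psi)|^2 = |\nabla\psi_a|^2$, this yields
$$\int_M S\psi_a^2 \;\geq\; \lambda_1(M)\left(\int_M \psi_a^2 \;-\; \frac{\left(\int_M\psi_a\right)^2}{V}\right).$$

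The crucial final step is to upgrade this via the vanishing of $\int_M S\psi_a$. Writing $\bar S := \frac{1}{V}\int_M S$, a direct expansion using $\int_M S\psi_a = 0$ gives $\int_M (S-\bar S)(\psi_a - \bar\psi) = -V\bar S\bar\psi$. Applying Cauchy--Schwarz to this pairing,
$$\frac{\left(\int_M S\right)^2\left(\int_M \psi_a\right)^2}{V^2} \;\leq\; \left(\int_M S^2 - \frac{(\int_M S)^2}{V}\right)\left(\int_M \psi_a^2 - \frac{(\int_M\psi_a)^2}{V}\right),$$
and expanding the right-hand side and cancelling the common term $\left(\int_M S\right)^2\left(\int_M\psi_a\right)^2/V^2$ rearranges precisely to
$$\int_M \psi_a^2 - \frac{\left(\int_M\psi_a\right)^2}{V} \;\geq\; \frac{\left(\int_M S\right)^2\int_M \psi_a^2}{V\int_M S^2},$$
where the non-totally-geodesic hypothesis ensures $\int_M S^2 > 0$ so the right-hand side is well-defined. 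Combining with the Poincar\'e bound from the previous step gives the lemma. The only genuine observation is that the vanishing $\int_M S\psi_a = 0$ produces exactly the Cauchy--Schwarz inequality needed to bridge the Poincar\'e estimate and the target; everything else is bookkeeping, so I do not expect any serious obstacle.
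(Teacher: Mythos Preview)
Your proof is correct and follows essentially the same route as the paper: both use $\Delta\psi_a=-S\psi_a$ to get $\int_M S\psi_a=0$ and $\int_M S\psi_a^2=\int_M|\nabla\psi_a|^2$, then the Rayleigh quotient for $\lambda_1(M)$, and finally Cauchy--Schwarz to bound $(\int_M\psi_a)^2$. The paper applies Cauchy--Schwarz to $\int_M(1-KS)\psi_a$ with the optimizing choice $K=\int_M S/\int_M S^2$, whereas you apply it to the centered pairing $\int_M(S-\bar S)(\psi_a-\bar\psi)$; both reduce to the identical inequality $\big(\int_M S^2\big)\big(\int_M\psi_a^2-(\int_M\psi_a)^2/V\big)\ge\big(\int_M S\big)^2\int_M\psi_a^2/V$, so there is no substantive difference.
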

\begin{proof}
For all $a\in \mathbb{S}^{n+1}$, by Proposition \ref{prop funda} we have
$$
\Delta \psi_{a}=-S\psi_{a},\ \ \int_{M}S\psi_{a}=0.
$$
Hence, for all constant $K\in \mathbb{R}$, one has
\begin{equation*}
\left( \int_{M}\psi_{a}\right) ^2=\left(\int_{M}\left( 1-KS\right)\psi_{a} \right) ^2\leq\int_{M}\left( 1-KS\right)^2
\int_{M}\psi_{a}^2.
\end{equation*}
Set $$K=\frac{\int_{M}S }{\int_{M}S^2}, $$
then by similar argument as in Lemma \ref{lem A similar Simons Inequality of minimal hypersurfaces in the spheres}, we deduce
$$
\begin{aligned}
\int_{M}S\psi_a^2&=\int_{M}-\psi_{a}\Delta\psi_a=\int_{M}|\nabla \psi_a|^2
\geq\lambda_1(M)\left(
 \int_{M}\psi_a^2-
\frac{\left( \int_{M}\psi_a\right) ^2}
{{\rm Vol}(M^n)}\right) \\
&\geq \lambda_1(M)\frac{ \int_{M}\left( 2KS-K^2S^2\right)}{{\rm Vol}(M^n)}
\int_{M}\psi_{a}^2
=\lambda_1(M)
\frac{\left( \int_{M}S\right)^2 \int_{M}\psi_{a}^2}{ {\rm Vol}(M^n)\int_{M}S^2}.
\end{aligned}
$$
\end{proof}

\begin{rem}
If Yau Conjecture $\ref{conj  Yau 1982}$ $(i)$ is true, i.e.,
$\lambda_1(M)=n$ for all embedded minimal hypersurface of
$\mathbb{S}^{n+1}$, we have
$$
\int_{M}S\psi_{a}^2
\geq
n
\frac{\left( \int_{M}S\right)^2 \int_{M}\psi_{a}^2}{ {\rm Vol}(M^n)\int_{M}S^2},
$$
for all $a\in \mathbb{S}^{n+1}$.
Hence, summing up this inequality over an orthonormal basis $\{a_j\}_{j=1}^{n+2}$ for $a=a_j$ and noting that $\sum_{j=1}^{n+2}\psi_{a_j}^2=1$, we obtain $\int_{M}S^2\geq n \int_{M}S$. This gives another  proof of the Simons inequality for embedded hypersurfaces.
\end{rem}

\begin{lem}  \label{lem  the upper  bonud of psiS}
Let $ M^n$ be a closed  immersed  minimal hypersurface  in  $\mathbb{S}^{n+1}$, then for all $a\in \mathbb{S}^{n+1}$, we have
\begin{equation*}
\int_{M}S\psi_{a}^2
\leq
\frac{n-1}{2n-1}
\int_{M}S.
\end{equation*}
\end{lem}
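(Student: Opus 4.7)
My plan is to reduce the integral $\int_M S\psi_a^2$ to a pointwise bound on $|Aa^{\mathrm T}|^2$ that exploits minimality of $M$, and then close the estimate by using the orthogonal decomposition of the ambient unit vector $a$ along $M$.

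First, I would integrate by parts. Since $\Delta \psi_a=-S\psi_a$ (Proposition \ref{prop funda} with $H=0$) and $\nabla\psi_a=-Aa^{\mathrm T}$, we have
\begin{equation*}
\int_M S\psi_a^2 \;=\; -\int_M \psi_a\,\Delta\psi_a \;=\; \int_M|\nabla\psi_a|^2 \;=\; \int_M |Aa^{\mathrm T}|^2.
\end{equation*}
So the problem is reduced to estimating $\int_M|Aa^{\mathrm T}|^2$ from above in terms of $\int_M S$.

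Next, I would establish a pointwise inequality coming from the trace-free condition $\mathrm{tr}\,A=nH=0$. If $\lambda_1,\dots,\lambda_n$ are the principal curvatures, then for each index $i$ we have $\lambda_i=-\sum_{j\ne i}\lambda_j$, so Cauchy--Schwarz gives $\lambda_i^2\le(n-1)\sum_{j\ne i}\lambda_j^2=(n-1)(S-\lambda_i^2)$, i.e.\ $\lambda_i^2\le\tfrac{n-1}{n}S$. Diagonalizing $A$ in a local frame then yields
\begin{equation*}
|Aa^{\mathrm T}|^2 \;=\; \sum_i \lambda_i^2 (a^{\mathrm T}_i)^2 \;\le\; \tfrac{n-1}{n}\,S\,|a^{\mathrm T}|^2.
\end{equation*}

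Finally, I would use the orthogonal decomposition $a=\varphi_a\,x+\psi_a\,\nu+a^{\mathrm T}$ in $\mathbb{R}^{n+2}$, which together with $|a|=1$ yields $|a^{\mathrm T}|^2=1-\varphi_a^2-\psi_a^2$. Combining the two displays,
\begin{equation*}
\int_M S\psi_a^2 \;\le\; \tfrac{n-1}{n}\int_M S\bigl(1-\varphi_a^2-\psi_a^2\bigr) \;\le\; \tfrac{n-1}{n}\int_M S \;-\; \tfrac{n-1}{n}\int_M S\psi_a^2,
\end{equation*}
where I discarded the nonnegative term $\tfrac{n-1}{n}\int_M S\varphi_a^2$. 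Rearranging gives $\tfrac{2n-1}{n}\int_M S\psi_a^2 \le \tfrac{n-1}{n}\int_M S$, which is the claimed bound.

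The entire argument is short and the only substantive ingredient is the elementary bound $\max_i\lambda_i^2\le\tfrac{n-1}{n}S$ arising from $\sum_i\lambda_i=0$; I do not anticipate any real obstacle. The only delicate point is making sure the constants match: the factor $\tfrac{n-1}{n}$ from minimality combines with the decomposition $|a^{\mathrm T}|^2=1-\varphi_a^2-\psi_a^2$ to produce exactly the denominator $2n-1$, which is why the statement is sharp in this form.
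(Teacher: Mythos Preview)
Your proposal is correct and follows essentially the same route as the paper: integrate by parts using $\Delta\psi_a=-S\psi_a$, bound $|Aa^{\mathrm T}|^2\le\tfrac{n-1}{n}S\,|a^{\mathrm T}|^2$ via the elementary trace-free inequality $\lambda_i^2\le\tfrac{n-1}{n}S$, insert $|a^{\mathrm T}|^2=1-\varphi_a^2-\psi_a^2$, and rearrange. The paper orders the eigenvalues and only bounds the largest $\lambda_1^2$, but this is the same argument.
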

\begin{proof}
Let $\{\lambda_i\}_{i=1}^n$  be the eigenvalues of $A$ with $\lambda_1^2\geq\lambda_2^2\geq\cdots\geq\lambda_n^2$. Then we have $$\sum_{i=1}^{n}\lambda_i=0,\quad \sum_{i=1}^{n}\lambda_i^2=\|A\|^2=S.$$
Thus
$$
\begin{aligned}
0=\Big(\sum_{i=1}^{n}\lambda_i\Big)^2
&=\lambda_1^2+
2\lambda_1\sum_{i=2}^{n}\lambda_i+
\Big(\sum_{i=2}^{n}\lambda_i\Big)^2=-\lambda_1^2+\Big(\sum_{i=2}^{n}\lambda_i\Big)^2\\
&\leq-\lambda_1^2+(n-1)\sum_{i=2}^{n}\lambda_i^2=(n-1)S-n\lambda_1^2.
\end{aligned}
$$
Hence
\begin{equation*}
\lambda_1^2\leq\frac{n-1}{n}S,
\end{equation*}
where the equality holds if and only if $ \lambda_1=(1-n)\lambda_2$ and $\lambda_2=\lambda_3=\dots=\lambda_n$. It follows from Proposition \ref{prop funda} that
$$
\begin{aligned}
\int_{M}S\psi_a^2=
&
\int_{M}-\psi_{a}\Delta\psi_a=\int_{M}|\nabla\psi_a|^2=
\int_{M}\left\langle Aa^{\rm{T} },  Aa^{\rm{T} }\right\rangle\\
&\leq\int_{M}\lambda_1^2\left\langle a^{\rm{T} },  a^{\rm{T} }\right\rangle
 =\int_{M}\lambda_1^2\left( 1-\psi_{a}^2-\varphi_a^2\right)\\
&\leq \frac{n-1}{n}\int_{M}S \left( 1-\psi_{a}^2-\varphi_a^2\right).
\end{aligned}
$$
Then
\begin{equation*}
   \int_{M}S\psi_{a}^2
   \leq
   \frac{n-1}{2n-1}
   \int_{M}S\left( 1-\varphi_a^2\right)
   \leq
   \frac{n-1}{2n-1}
   \int_{M}S.
   \qedhere
\end{equation*}
\end{proof}

\begin{lem}  \label{lem equator vanish}
Let $M^n$ be a closed  immersed    hypersurface  in  $\mathbb{S}^{n+1}$, then there exists an equator $\mathbb{S}^{n}\subset \mathbb{S}^{n+1}$ such that for all $a\in \mathbb{S}^{n}$,
$$
\int_{M}\psi_{a}=0.
$$
\end{lem}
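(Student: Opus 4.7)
The plan is to observe that the map $a \mapsto \int_{M}\psi_{a}$ is linear in $a \in \mathbb{R}^{n+2}$. Indeed, since $\psi_{a}(x) = \langle \nu(x), a\rangle$, one may simply commute the integral with the inner product and write
$$
\int_{M}\psi_{a} \,=\, \Big\langle \int_{M}\nu\, dA,\, a\Big\rangle \,=\, \langle V, a\rangle,
$$
where $V := \int_{M}\nu\, dA$ is a single, fixed vector in $\mathbb{R}^{n+2}$ determined by $M^n$ and its (globally defined, by the standing setup of the paper) unit normal $\nu$. The content of the lemma therefore reduces to the purely linear-algebraic claim that the functional $\langle V,\cdot\rangle$ vanishes identically on some equator.

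To produce such an equator, I would use that every equator of $\mathbb{S}^{n+1}$ has the form $\mathbb{S}^{n+1} \cap w^{\perp}$ for some unit vector $w \in \mathbb{S}^{n+1}$, and its span in $\mathbb{R}^{n+2}$ is exactly the hyperplane $w^{\perp}$. Hence the requirement $\langle V, a\rangle = 0$ for all $a$ in the equator is equivalent to $V \in (w^{\perp})^{\perp} = \mathrm{span}(w)$. If $V = 0$ any unit vector $w$ works; if $V \ne 0$ one takes $w := V/|V|$. With this choice, the equator $\mathbb{S}^{n} := \mathbb{S}^{n+1}\cap w^{\perp}$ satisfies $\int_{M}\psi_{a} = \langle V, a\rangle = 0$ for every $a \in \mathbb{S}^{n}$, as desired.

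There is no real obstacle here: the argument uses nothing beyond linearity of the integral in $a$, and in particular requires no minimality or embeddedness. The only implicit hypothesis is that $\nu$ is a well-defined $\mathbb{R}^{n+2}$-valued map on $M^{n}$, which is already built into the paper's notation for $\psi_{a}$.
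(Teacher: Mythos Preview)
Your proof is correct and is essentially the same as the paper's: both observe that $a\mapsto\int_M\psi_a$ is a linear functional on $\mathbb{R}^{n+2}$ and then take the equator inside its kernel. Your version is slightly more explicit in writing the functional as $\langle V,a\rangle$ with $V=\int_M\nu$, but the argument is identical in substance.
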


\begin{proof}
Observe that the function $\mathcal{I}(a)$ on $a\in\mathbb{R}^{n+2}$ defined by $\mathcal{I}(a)=\int_{M}\psi_{a}$ is a linear function. Therefore the kernel of $\mathcal{I}(a)$ is a linear subspace of $\mathbb{R}^{n+2}$ of dimension at least $n+1$, whose intersection with $\mathbb{S}^{n+1}$ is the required equator.
\end{proof}
\begin{rem}\label{our conj}
For a closed immersed (or embedded), non-totally geodesic,  minimal hypersurface $M^n$  in  $\mathbb{S}^{n+1}$, we conjecture that $\int_{M}\psi_{a}=0$ for all $a\in\mathbb{S}^{n+1}$. For example,  if $M^n$ is invariant under the antipodal map and  the unit normal vector field along $M^n$ is odd, i.e., $\nu (-x)=-\nu (x)$, then the conjecture holds. 
 This would improve the inequality of Theorem $\ref{Corollary  gap  results of Perdomo Conjecture in the introduction}$ into $$\int_M S\geq \lambda_1(M){\rm Vol}(M^n).$$
Furthermore, it would prove the inequality part of Perdomo Conjecture $\ref{Conjecture Perdomo 2004}$ if Yau Conjecture $\ref{conj  Yau 1982}$ $(i)$ is true.
\end{rem}

\section{Proof of the theorems}
\begin{proof}[\textbf{Proof of Theorem $\mathbf{\ref{thm  gap  results of Perdomo Conjecture in the introduction}}$}]
 Let $M^n$ be a  closed  embedded hypersurface in $\mathbb{S}^{n+1}$. Denote the components of $\mathbb{S}^{n+1}\backslash M^n$  by $U_1$ and $U_2$, then we have $\mathbb{S}^{n+1}=U_1\bigcup_{M^n} U_2$. Obviously, we can extend the height function $\varphi_a(x)=\langle x,a\rangle$ on $M^n$ to $\mathbb{S}^{n+1}$. It is easy to prove that on $x\in\mathbb{S}^{n+1}$, $$\overline{\nabla}\varphi_a=a^{\rm\widetilde{T}},\ \ \overline{\Delta}\varphi_a=-(n+1)\varphi_a,$$
where  $a^{\rm\widetilde{T}}=a-\varphi_a x\in \Gamma(T\mathbb{S}^{n+1})$ denotes the tangent component of $a$ at $x\in\mathbb{S}^{n+1}$.  Then for all $a\in \mathbb{S}^{n+1}$ and $i\in \{1,2\}$, by the divergence theorem we have
$$
\left| \int_{M}\psi_{a}\right| =
\left| \int_{U_i}{\rm div}\left( a^{\rm \widetilde{T}}\right) \right|=
\left| \int_{U_i}\overline{\Delta}\varphi_{a}\right|=(n+1)
\left| \int_{U_i}\varphi_{a}\right|.
$$
By
\begin{equation*}\label{equation integral of varpsi}
\left| \int_{U_i}\varphi_{a}\right|\leq \int_{U_i}\left| \varphi_{a}\right|,
\end{equation*}
and
\begin{equation*}\label{equation psi equal Sn integral}
\int_{U_1}\left| \varphi_{a}\right|+
\int_{U_2}\left| \varphi_{a}\right|=
\int_{\mathbb{S}^{n+1}}\left| \varphi_{a}\right|
=2{\rm Vol}\left(\mathbb{B}^{n+1} \right)=
\frac{2}{n+1}{\rm Vol}\left(\mathbb{S}^{n} \right),
\end{equation*}
we can choose some $i_0\in \{1,2\}$ for every fixed $a\in \mathbb{S}^{n+1}$,  such that
\begin{equation}\label{equation psi integral}
\left| \int_{M}\psi_{a}\right| =
(n+1)\left| \int_{U_{i_0}}\varphi_{a}\right|\leq (n+1)
 \int_{U_{i_0}}\left| \varphi_{a}\right| \leq
{\rm Vol}\left(\mathbb{S}^{n} \right).
\end{equation}
By Lemma \ref{lem equator vanish}, we can choose an orthonormal basis $\{a_j\}_{j=1}^{n+2}$ of $\mathbb{R}^{n+2}$ such that $a_i$ $(i=1,\cdots,n+1)$ lie in the kernel of $\mathcal{I}(a)$, i.e.,
\begin{equation}\label{basis-ai}
\mathcal{I}(a_i)=\int_{M}\psi_{a_i}=0,\ \ i=1,\cdots,n+1.
\end{equation}
Let $\{e_k\}_{k=1}^n$ be a local orthonormal frame of $M^n$ such that $Ae_k=\lambda_k e_k$. Then the tangent component $a^{\rm T}$ along $M^n$ and $Aa^{\rm T}$ can be expressed by $$a^{\rm T}=\sum_{k=1}^n\langle a, e_k\rangle e_k, \ \ Aa^{\rm T}=\sum_{k=1}^n\langle a, e_k\rangle \lambda_k e_k.$$
This implies 
$$\sum_{j=1}^{n+2}|A{a_j}^{\rm T}|^2=\sum_{j=1}^{n+2}\sum_{k=1}^n\langle a_j, e_k\rangle^2 \lambda_k^2=\sum_{k=1}^n \lambda_k^2=|A|^2=S.$$
Then, by
$\sum_{j=1}^{n+2}
\psi_{a_j}^2=1$,
 $(\ref{equation psi integral}, \ref{basis-ai})$ and Proposition $\ref{prop funda}$, we deduce
$$
\begin{aligned}
\int_{M}S
&= 
\int_{M}\sum_{j=1}^{n+2}|A{a_j}^{\rm T}|^2
=\sum_{j=1}^{n+2}\int_{M}|\nabla\psi_{a_j}|^2\\
&\geq\lambda_1(M)\sum_{j=1}^{n+2}
\left(\int_{M}\psi_{a_j}^2-\frac{\left(\int_{M}\psi_{a_j} \right)^2 }{{\rm Vol}(M^n)} \right) \\
&= \lambda_1(M)\left( {\rm Vol}(M^n) -
\frac{\left(\int_{M}\psi_{a_{n+2}} \right)^2}{{\rm Vol}(M^n)}
\right) \\
&\geq \lambda_1(M)
\frac{{\rm Vol}^2(M^n)-{\rm Vol}^2(\mathbb{S}^{n})}{{\rm Vol}(M^n)}.
\end{aligned}
$$
This proves the inequality of the theorem. If the equality holds, the equal signs of  (\ref{equation psi integral}) also hold. Then there exists some $a=a_{n+2}\in \mathbb{S}^{n+1}$ such that
$$
\varphi_{a}\geq 0 \ \  \left(or\ \varphi_{a}\leq 0 \right)\ \   on\ U_{i_0},
$$
and thus $U_{i_0}$ lies in the hemisphere $\mathbb{S}_+^{n+1}:=\{x\in\mathbb{S}^{n+1}\mid \varphi_{a}(x)\geq 0 \}$, moreover,
\begin{equation*}
 \int_{U_{i_0}} \varphi_{a}
 =\frac{1}{n+1}
{\rm Vol}\left(\mathbb{S}^{n} \right).
\end{equation*}
On the other hand, we have  $$\int_{\mathbb{S}_+^{n+1}} \varphi_{a}=
\frac{1}{n+1} {\rm Vol}\left(\mathbb{S}^{n} \right).$$
Therefore we have $\overline{U_{i_0}}=\mathbb{S}_+^{n+1}$ and thus $M^n=\mathbb{S}^{n}$ is totally geodesic. 
\end{proof}


\begin{proof}[\textbf{Proof of Theorem $\mathbf{\ref{thm  some results of Perdomo Conjecture in the introduction}}$}]
\textbf{Case $({\rm i})$.}
As in the proof of Theorem \ref{thm  gap  results of Perdomo Conjecture in the introduction}, by Lemma \ref{lem equator vanish}, we can choose an orthonormal basis $\{a_j\}_{j=1}^{n+2}$ of $\mathbb{R}^{n+2}$ such that
\begin{equation*}
\int_{M}\psi_{a_i}=0,\ \ i=1,\cdots,n+1.
\end{equation*}
Hence by Proposition $\ref{prop funda}$,
\begin{equation}\label{Spsi lambda2 }
\int_{M}S\psi_{a_i}^2=\int_{M}|\nabla\psi_{a_i}|^2\geq
\lambda_1(M)\int_{M}\psi_{a_i}^2
\ \ i=1,\cdots,n+1.
\end{equation}
Due to Lemma \ref{lem  the lower bonud of psiS} and (\ref{Spsi lambda2 }), we have
$$
\sum_{i=1}^{n+1}\int_{M}S\psi_{a_i}^2+\frac{ {\rm Vol}(M^n)\int_{M}S^2}{\left( \int_{M}S\right)^2 }
\int_{M}S\psi_{a_{n+2}}^2
\geq\lambda_1(M)\sum_{i=1}^{n+2}
\int_{M}\psi_{a_i}^2.
$$
 Since $\sum_{i=1}^{n+2}\psi_{a_i}^2=1$, we have
\begin{equation}\label{S lambda var}
\int_{M} S +
\frac{ {\rm Vol}(M^n)\int_{M}S^2-\left( \int_{M}S\right)^2}
{\left( \int_{M}S\right)^2 }
\int_{M}S\psi_{a_{n+2}}^2\geq
 \lambda_1(M){\rm Vol}(M^n).
\end{equation}
By Lemma \ref{lem A similar Simons Inequality of minimal hypersurfaces in the spheres}, Lemma \ref{lem  the upper  bonud of psiS}, (\ref{equation cauchy inequ}) and  (\ref{S lambda var}),
 we obtain
$$
\int_{M}S +
\frac{ \int_{M}S(S-n)(S-S_{\min})}
{\int_{M}S}
\frac{2(n-1){\rm Vol}(M^n)}{\lambda_1(M)(2n-1)}\geq \lambda_1(M){\rm Vol}(M^n).
$$
By Theorem \ref{thm J. Simons} and $\lambda_1(M)\leq n$ for minimal hypersurfaces, without loss of generality, we can suppose $S_{\min}\leq n\leq S_{\max}$.  It follows that $$(S-n)(S-S_{\min})\leq(S_{\max}-n)(S_{\max}-S_{\min}).$$ 
Thus we obtain the required inequality
$$\begin{aligned}
 \int_{M}S
  &\geq
 \left( \lambda_1(M)-\frac{2(n-1)}{\lambda_1(M)(2n-1)}\left(S_{\max}-n \right) \left( S_{\max}-S_{\min}\right) \right){\rm Vol}(M^n). 
 \end{aligned}
$$

\textbf{Case $({\rm ii})$.}
By Lemma  \ref{lem  the upper  bonud of psiS},  (\ref{equation cauchy inequ}) and  (\ref{S lambda var}), we have
$$
\int_{M} S +
\frac{n-1}{2n-1}
\frac{ {\rm Vol}(M^n)\int_{M}S^2-\left( \int_{M}S\right)^2}
{\int_{M}S }\geq
 \lambda_1(M) {\rm Vol}(M^n). 
$$
Thus
$$
\int_{M}S^2
\geq
\frac{n}{n-1}\int_{M}S
\left( \frac{2n-1}{n}\lambda_1(M)-\frac{\int_{M}S}{{\rm{Vol}}(M^n)  }\right),
$$
which implies 
$$
n\int_{M}S+(n-1)S_{\max}{\rm{Vol}}(M^n)\geq (2n-1)\lambda_1(M){\rm{Vol}}(M^n),
$$
if $S\not\equiv 0$.
\end{proof}

\begin{proof}[\textbf{Proof of Corollary $\mathbf{\ref{Corollary the week result of Perdomo Conjecture}}$}]
If $S\not\equiv 0$, then by  Theorem \ref{thm  some results of Perdomo Conjecture in the introduction} we have
$$ \int_{M}\left( S+\frac{2(n-1)}{\lambda_1(M)(2n-1)}\left(S_{\max}-n \right) \left( S_{\max}-S_{\min}\right) \right)\geq
 \lambda_1(M){\rm{Vol}}(M^n).$$
Substituting condition $(ii)$ $S_{\max}<S_{\min}+n$ into the inequality above, we get
$$\int_{M}S \left(1+\frac{2(n-1)n}{\lambda_1(M)(2n-1)}\right)\geq\int_{M} \left(S+\frac{2(n-1)n}{\lambda_1(M)(2n-1)}S_{\min}\right)>
 \lambda_1(M){\rm{Vol}}(M^n).$$
 Therefore
 $$\int_{M}S> \frac{(2n-1)\lambda_1^2(M)}{(2n-1)\lambda_1(M)+2n(n-1)}{\rm{Vol}}(M^n),$$
 contradicting to condition $(i)$.
\end{proof}

\begin{acknow}
The authors sincerely thank Professors Aldir Brasil and Oscar M. Perdomo for their interests and  suggestions. They also thank the referee for their helpful comments.
\end{acknow}

\end{document}